\newtheorem{theorem}{Theorem}
\theoremstyle{definition}
\newtheorem{definition}{Definition}
\DeclareMathOperator*{\argmax}{arg\,max}
\renewcommand\nomgroup[1]{%
    \item[\bfseries
    \ifstrequal{#1}{D}{Continuous Variables}{%
    \ifstrequal{#1}{B}{Sets}{%
    \ifstrequal{#1}{A}{Indices}{%
    \ifstrequal{#1}{C}{Discrete Variables}{%
    \ifstrequal{#1}{E}{Parameters}{}}}}}%
]}
\begin{document}
\title{Data-driven optimization of processes with degrading equipment}
\author[1]{Johannes Wiebe}
\author[2]{In\^es Cec\'ilio}
\author[1]{Ruth Misener}
\affil[1]{Department of Computing, Imperial College London, London, UK}
\affil[2]{Schlumberger Cambridge Research, Cambridge, UK}
\date{}

\maketitle

\begin{abstract}
    In chemical and manufacturing processes, unit failures due to equipment degradation can lead to process downtime and significant costs.
    In this context, finding an optimal maintenance strategy to ensure good unit health while avoiding excessive expensive maintenance activities is highly relevant.
    We propose a practical approach for the integrated optimization of
    production and maintenance capable of incorporating uncertain sensor data regarding equipment degradation.
    To this end, we integrate data-driven stochastic degradation models from
    Condition-based Maintenance into a process level mixed-integer optimization problem using Robust Optimization.
    We reduce computational expense by utilizing both analytical and data-based approximations and optimize the Robust Optimization parameters using Bayesian Optimization.
    We apply our framework to five instances of the State-Task-Network and demonstrate that it can efficiently compromise between equipment availability and cost of maintenance.
\end{abstract}

\section{Introduction}

Most technical processes contain equipment which degrades over time due to its
usage. Degradation may lead to serious equipment failures, unless
\textbf{preventive maintenance} actions are scheduled regularly to restore
equipment conditions. While frequent preventive maintenance can keep equipment
availability high, it also incurs significant cost. At the same time,
unexpected equipment failures can lead to loss of production and high
\textbf{corrective maintenance} costs. Finding the optimal balance between
preventive and corrective maintenance is difficult, because degradation tends
to be at least partially random and the health state of equipment can often
only be estimated from data subject to uncertainty. To make things worse, scheduling maintenance
activities is not independent from production planning and scheduling. A unit
undergoing maintenance might, for example, be unavailable for production.
Furthermore, the state of equipment health tends to depend not only on the
selected maintenance strategy, but also on the process operating strategy.
Operating a process with a high throughput might enable higher production
volumes and more sales, but could also cause more equipment degradation and
therefore a higher maintenance cost. These interactions between process
conditions, maintenance strategy, and the equipment's uncertain
state of health make finding optimal and compatible maintenance
and operating strategies a very challenging data-driven optimization problem
under uncertainty.

One way of reducing the equipment maintenance cost is to determine
maintenance schedules based on information regarding the equipment's state of
health collected through condition monitoring \cite{Jardine2006}. This is the
\textbf{Condition-based maintenance} (CBM) paradigm \cite{Jardine2006,
Barraza2014, Bousdekis2015, Alaswad2016}. Due to the increased availability of
cheap sensors and thereby large quantities of system health data, CBM is
becoming more attractive \cite{Meeker2014}. While much attention has been paid
to data collection \& processing and prognostic modeling, the objective is
usually to minimize the cost of maintaining a single unit \cite{Alaswad2016}.
This means that interaction between maintenance strategies for each piece of
equipment and the operating strategy of the entire process has largely been
neglected.

However, these interactions have been considered by multiple authors in the context
of \textbf{integrated maintenance scheduling and process optimization}.
Early approaches in this field which explicitly model degradation
assume constant, known reliability or decay
curves\citep{Dedopoulos1995,Dedopoulos1995b,Vassiliadis1999,Vassiliadis2001,Casas2005}.
\citet{Dedopoulos1995,Dedopoulos1995b} combine short-term stochastic
scheduling with long-term maintenance scheduling in a two-step procedure, while
\citet{Vassiliadis2001} determine optimal availability thresholds at which maintenance should be performed.
\citet{Georgiadis2000} optimize
the cleaning and energy management of heat exchanger networks subject to
fouling, which is assumed to follow a known profile. \citet{Liu2014} consider
scheduling of maintenance and biopharmaceutical batch production with a
deterministic performance decay. \citet{Xenos2016} optimize maintenance and
production scheduling of a compressor network. The power consumed by the
compressors is assumed to increase linearly with operating time (since
maintenance was performed) due to fouling. \citet{Zulkafli2016,Zulkafli2017}
develop an optimization framework for simultaneous operational planning and
maintenance scheduling of production and utility systems. They consider extra
energy costs caused by performance degradation. The degradation is assumed to
depend on operating time and the production rate. \citet{Aguirre2018} consider
integrated planning, scheduling and maintenance under schedule-dependent,
deterministic performance decay. \citet{Rajagopalan2017} analyze turnaround
rescheduling and apply stochastic programming to manage unplanned
outages.
\citet{Biondi2017} extend the
State Task Network (STN), originally proposed by \citet{Kondili1993}, to
account for degrading equipment and different operating modes. They assume that
each unit, after maintenance, has a given maximum residual lifetime and that
each task performed on a unit in a certain operating mode reduces this residual
lifetime by a given amount. Noticeably, none of these authors make use of the
wealth of knowledge regarding degradation modeling and inference from data
available from the CBM literature. Furthermore, degradation is assumed to be
deterministic which may not be the case in practice.

Recent works have started to incorporate degradation models from CBM into
process level Mixed-Integer Linear Programming (MILP)
problems\citep{Yildirim2016_1,Yildirim2016_2,Yildirim2017,Basciftci2018,Verheyleweghen2017}.
\citet{Yildirim2016_1, Yildirim2016_2} formulate an optimization model for
generator maintenance and production scheduling. The cost of maintenance is
calculated beforehand using a data-driven degradation model:
\begin{equation*}
    c_t = \frac{c^{prev} \left(1 - p_t^{f}\right) + c^{corr}
        p_t^{f}}{\int_0^t{p_{\tau}^{f}d\tau}},
\end{equation*}
where $c_t$ is the predicted cost of performing maintenance at
time $t$ given the cost of preventive $\left(c^{prev}\right)$ and corrective
$\left(c^{corr}\right)$ maintenance. The failure probability $p^{f}_t =
P(\text{unit fails before }t)$ is calculated from the degradation model. The
authors later applied the same approach to maintenance and operation of wind
farms and extended it to include opportunistic maintenance \cite{Yildirim2017}.
While this approach starts to incorporate information from more sophisticated
degradation models into process level optimization problems, degradation is
still considered to be deterministic in the MILP optimization.
\citet{Basciftci2018} extend this to consider sudden failures by using
stochastic programming and generating scenarios from the underlying degradation
model. To the best of our knowledge Ba{\c{s}}{\c{c}}iftci et al.'s{\cite{Basciftci2018}} approach is the
only work combining stochastic
optimization with information from degradation models.

Unfortunately, the aforementioned approach cannot capture effects
of the selected operating strategy on degradation. Since maintenance cost is
calculated based on the degradation model before the optimization problem is
solved, the degradation is assumed to be independent of the operating strategy.
In practice this will often not be the case.

This paper argues for a tighter integration between the sophisticated
degradation models used in CBM and process level maintenance scheduling and
process optimization. To this end we make multiple contributions:
\begin{itemize}
    \item We show how L\'evy type models, a class of stochastic processes
        commonly used in Degradation Modeling, can be incorporated into an integrated maintenance and process MILP model.
        L\'evy type models include the Wiener and Gamma processes -- two very popular models in CBM\@.
        By making the L\'evy models parameters depend on a set of operating modes, equipment degradation too depends on the operating strategy.
    \item We show how uncertainty and randomness in the equipment's degradation characteristics can be incorporated using adjustable robust optimization.
        We use results from the CBM literature to efficiently determine the
        robustness of the obtained solution.
    \item We prove that, in certain cases, feasible solutions to the adjustable robust optimization problem can be found by solving a deterministic approximation with worst case values for the uncertain parameters.
    \item Realizing that process planning and scheduling can be computationally expensive yet highly repetitive, we develop a computationally efficient, data-driven way of a-priori estimating equipment failure probabilities.
        To this end, we generate data using a short-term scheduling model repeatedly.
        Using this data, we propose two methods based on Logistic Regression
        capable of cheaply generating a large number of long-term schedules which can be used to estimate failure probabilities.
    \item We propose Bayesian optimization for efficiently optimizing the uncertainty set.
        The uncertainty set size depends on a small number of parameters, but solving the robust MILP integrated maintenance and process optimization problem can be computationally expensive.
        Bayesian optimization is ideal for this kind of low dimensional problem with expensive function evaluations.
\end{itemize}

As a challenging case study, we apply the proposed method to an extension of the state-task-network (STN) \cite{Biondi2017, Kondili1993}.
This model combines both planning and scheduling of production and maintenance with operating mode dependent equipment degradation.
We test our method on a number of STN instances \cite{Lappas2016, Kondili1993, Karimi1997, Maravelias2003, Ierapetritou1998}.

\section{Combining degradation modeling and robust optimization}

Following \citet{Vassiliadis2001}, we assume an integrated production and maintenance scheduling problem of the form
\begin{align}
    \label{eq:general}
    & \underset{\bm{x},\bm{m}}{\text{min}}
    &   & \text{cost}(\bm{x}, \bm{m})                        \\
    & \text{s.t.}
    &   & \text{process model}(\bm{x}, \bm{m}) \tag{1a}      \\
    &
    &   & \text{maintenance model}(\bm{x}, \bm{m}), \tag{1b}
\end{align}
where $\bm{x}$ are the process variables (continuous and discrete) and $\bm{m}$ are the maintenance related variables.
The process model includes, e.g., material balances, energy balances, unit constraints, and the maintenance model includes, e.g., maintenance crew constraints or constraints regarding different types of maintenance.
Note that cost minimization could easily be replaced by profit maximization.

A health model added to Problem~\ref{eq:general} accounts for equipment degradation:
\begin{align}
    \label{eq:genhealth}
    & \underset{\bm{x},\bm{m}, \bm{h}}{\text{min}}
    &   & \text{cost}(\bm{x}, \bm{m}, \bm{h})                                      \\
    & \text{s.t.}
    &   & \text{process model}(\bm{x}, \bm{m}, \bm{h})\tag{2a}                     \\
    &
    &   & \text{maintenance model}(\bm{x}, \bm{m}, \bm{h})\tag{2b}                 \\
    &
    &   & \text{health model}(\bm{x}, \bm{m}, \bm{h})\tag{2c}\label{eq:healthmod},
\end{align}
where $\bm{h}$ are health related variables and the health model includes all equipment health or degradation related constraints.
Our first contribution is developing a generic health model based on the assumption that the equipments' state of health can be described by L\'evy type processes, a class of stochastic processes commonly used for modeling degradation in CBM.

\subsection{Degradation Modeling}
The premise in Degradation Modeling is that a degradation signal $s^{meas}(t)$ describes the state of degradation of a unit over time.
Signal $s^{meas}(t)$ can either be measured directly or obtained indirectly from measurements.
Two common assumptions adopted in this paper are that:

\begin{description}
    \item[(SMAX) \label{eq:smax}] The unit fails and requires corrective
        maintenance when $s^{meas}(t)$ crosses a
        threshold $s^{max}$ \cite{Wang2007},
    \item[(AGAN) \label{eq:agan}] $s^{meas}(t)$ is reset back to initial value
        $s^{0}$ after preventive maintenance.
        The unit is as-good-as-new (AGAN) \cite{Doyen2004}.
\end{description}

The degradation signal $s^{meas}(t)$ is often modeled by stochastic processes\cite{Alaswad2016}.
One class of stochastic processes are L\'evy type processes:
\theoremstyle{definition}
\begin{definition}{\textbf{L\'evy type process} \cite{Applebaum2004}.}
    A stochastic process $S(t) = \{S_{t}: t \in T\}$, where $S_{t}$ is a random variable, with
    \begin{enumerate}
        \item independent increments: $S_{t_2} - S_{t_1}, \ldots, S_{t_n} - S_{t_{n-1}}$ are independent for any $0 < t_1 < t_2 < \ldots < t_n < \infty$,
        \item stationary increments: $S_t - S_s$ and $S_{t-s} - S_0$ have the same distribution for any $s<t$,
        \item continuity in probability: $\lim_{h \rightarrow 0} P(|{S_{t+h} - S_t}| > \epsilon) = 0$ for any $\epsilon > 0$, $t \geq 0$.
    \end{enumerate}
\end{definition}

L\'evy type processes include both the Wiener and Gamma processes, which are the most commonly used stochastic processes in the Degradation Modeling literature \cite{Alaswad2016, Ye2015, Si2011, Nguyen2018}.
Due to their independence and stationarity, L\'evy type process increments can be described by
\begin{equation}
    \label{eq:levy}
    \begin{aligned}
          & S_{t} - S_{t-\Delta t} = D(\Delta t), \quad &   & D(\Delta t) \sim
        \mathcal{D}(\boldsymbol{\theta}, \Delta t), \quad \forall t,
    \end{aligned}
\end{equation}
where $D(\Delta t)$ is a random variable that follows a given distribution $\mathcal{D}(\boldsymbol{\theta}, \Delta t)$ with parameters $\boldsymbol{\theta}$.
A difficulty, however, arises when $\mathcal{D}$ is also
dependent on some of the operational variables $\boldsymbol{x}$:
\begin{equation}
    \label{eq:levyenv}
    D(\Delta t) \sim \mathcal{D}(\boldsymbol{\theta}(\boldsymbol{x}), \Delta t).
\end{equation}
This dependence has been addressed by assuming that the operational variables
$\boldsymbol{x}$ are piecewise constant, i.e., the process can only operate in a
number of discrete operating modes $k \in K$\cite{Liao2013, Li2016}. Under this
assumption Eqns.~\ref{eq:levy} and~\ref{eq:levyenv} simplify to
\begin{equation}
    \label{eq:levyom}
    \begin{aligned}
        & S_t - S_{t-\Delta t} = \sum_{k \in \mathcal{K}} x_{k,t} \cdot D_k(\Delta t),
        && D_k(\Delta t) \sim \mathcal{D}(\boldsymbol{\theta}_k, \Delta t),
    \end{aligned}
\end{equation}
where $x_{k,t}$ is $1$ if the process operates in mode $k$ at time $t$ and $0$
otherwise. Note that this approach is very similar to regime-switching L\'evy
models used extensively in finance \cite{Chevallier2017}. \citet{Biondi2017} use a
similar approach in their STN extension.

Much of the Degradation Modeling literature focuses on estimating $\boldsymbol{\theta}$ and using, e.g., Bayesian approaches to update it regularly based on new available data \cite{Gebraeel2005, Bian2012}.
A major advantage of Bayesian approaches is that $\boldsymbol{\theta}$ can be estimated based on a population of units first and then individually adjusted to a particular unit \cite{Gebraeel2008}.

\subsection{Constructing a health model}
We summarize the assumptions on which health model~\ref{eq:healthmod} hereafter
is based: For each process unit $j$, a degradation signal $s^{meas}_{j}(t)$ can be
obtained from measurements which is modeled well by a L\'evy process
$S_{j}(t)$, i.e., increments follow Eqn.~\ref{eq:levyom}. The unit fails when
$S_{j}(t)$ reaches a maximum threshold $s_{j}^{max}$~\nameref{eq:smax}
($T^{fail} = \inf\{t \in T|S_{j,t} > s_j^{max}\}$) and $S_{j}(t)$ resets to an
initial value $s_{j}^{0}$ after maintenance~\nameref{eq:agan}. Based on these
assumptions and assuming a discrete time formulation, the following health
model replaces Eqn.~\ref{eq:healthmod}:
\begin{equation}
    \label{eq:healthmodlevy}
    \begin{aligned}
        &
        && S_{j,t} \leq s_{j}^{max}
        &   &   & \forall t, j \in J  \\
        &
        && S_{j,t} =
        \begin{cases}
            S_{j,t-1} + \sum_{k \in \mathcal{K}}{x_{j,k,t}\cdot D_{j,k}}, & \text{if }
            m_{j,t} = 0\\ s_{j}^{0}, & \text{otherwise}
        \end{cases}
        &   &   & \forall t, j \in J, \\
    \end{aligned}
\end{equation}
where $J$ is the set of process units and $m_{j,t}$ is $1$ if a maintenance action starts on unit $j$ at time $t$ and $0$ otherwise.
To address the random nature of degradation, the random variables $D_{j,k}$ and
$S_{j,t}$ can be approximated by an uncertain parameter $\tilde{d}_{j,k}$ and a
deterministic variable $s_{j,t}$ respectively.
Assuming that $\tilde{d}_{j,k}$ is bounded by a compact uncertainty set $\mathcal{U}$, Problem~\ref{eq:genhealth} can be robustified by requiring that all constraints hold for any $\tilde{d}_{j,k}
\in \mathcal{U}$:
\begin{equation}
    \label{eq:combinedu}
    \begin{aligned}
        &
        && s_{j,t} \leq s_{j}^{max}
        &   &   & \forall t, j \in J                                   \\
        &
        && s_{j,t} =
        \begin{cases}
            s_{j,t-1} + \sum_{k \in \mathcal{K}}{x_{j,k,t}\cdot \tilde{d}_{j,k}},
            & \text{if } m_{j,t} = 0\\ s_{j}^{0}
            & \text{otherwise}
        \end{cases}
        &   &   & \forall \tilde{d}_{j,k} \in \mathcal{U}, t, j \in J. \\
    \end{aligned}
\end{equation}
This model explicitly considers preventive maintenance.
    Corrective maintenance becomes necessary only when realizations of
    $\tilde{d}_{j,k}$ lie outside the uncertainty set $\mathcal{U}$ and
    constraint $s_{j,t} \leq s^{max}_{j}$ is violated.

Notice that it is generally not possible to choose $s_{j,t}$ such that the
equality constraint in Problem~\ref{eq:combinedu} holds for all values of
$\bar{d}_{j,k}$ in $\mathcal{U}$, except for the trivial solution $x_{j,k,t} =
0, \forall j,k,t$.
This is because the degradation signal $s_{j,t}$ is an analytical variable, not a decision variable.
Interpreting the degradation signal instead as a second stage variable
$s_{j,t}\left(\tilde{d}_{j,k}\right)$ turns Problem~\ref{eq:combinedu} into an
adjustable robust optimization problem and a linear decision rule can be used
to expresses $s_{j,t}$ as a function of $\tilde{d}_{j,k}$\cite{Lappas2016}:
\begin{equation}
    \label{eq:lindec}
    s_{j,t}\left(\tilde{d}_{j,k}\right) = [s_{j,t}]_{0} + \sum_{k}{[s_{j,t}]_{k}\tilde{d}_{j,k}},
\end{equation}
where $[s_{j,t}]_{0}$ and $[s_{j,t}]_k$ are coefficients which become variables
in the adjustable robust problem.
Technically, $\tilde{d}_{j,k}$ should also be indexed by $t$ as every time
period constitutes an independent realization of $D_{j,k}$. Time-indexed
uncertain parameters have been previously explored\citep{Lappas2016}, but they can lead to a
large increase in variables, especially for discrete time formulations. We therefore make the simplifying assumption that
uncertainty is only revealed once after all variables except $s_{j,t}$ have
been selected.

The health model~\ref{eq:combinedu} can be reformulated to remove the conditonal equality constraint, resulting in the final formulation:
\begin{equation}
    \label{eq:rob}
    \begin{aligned}
        &\min_{\bm{x}, \bm{m}}
        && \text{cost}(\bm{x}=[x_{j,k,t},\ldots]^{\top},
        \bm{m}=[m_{j,t},\ldots]^{\top},
        \bm{h}=[s_{j,t}\left(\bar{d}_{j,k}\right)]^{\top})
        &   &   &                                        \\
        & \text{s.t}
        && \text{process model}(\bm{x}, \bm{m}, \bm{h})
        &   &   &                                        \\
        &
        && \text{maintenance model}(\bm{x}, \bm{m}, \bm{h})
        &   &   &                                        \\
        &
        && m_{j,t} s_{j}^{0} \leq s_{j,t}\left(\bar{d}_{j,k}\right) \leq s_{j}^{max} + m_{j,t} \cdot (s_{j}^{0} -
        s_{j,max})
        &   &   & \forall t, j \in J, \tilde{d} \in \mathcal{U}\\
        &
        && s_{j,t}\left(\bar{d}_{j,k}\right) \geq s_{j,t-\Delta t} + \sum_{k}{x_{j,k,t}\tilde{d}_{j,k}} + m_{j,t}
        \cdot (s_{j}^{0} - s_{j}^{max})
        &   &   & \forall t, j \in J, \tilde{d} \in
        \mathcal{U}\\
        &
        && s_{j,t}\left(\bar{d}_{j,k}\right) \leq s_{j,t-\Delta t} + \sum_{k}{x_{j,k,t}\tilde{d}_{j,k}}
        &   &   &\forall t, j \in J, \tilde{d} \in \mathcal{U}.\\
    \end{aligned}
\end{equation}
By replacing $s_{j,t}\left(\tilde{d}_{j,k}\right)$ with Eqn.~\ref{eq:lindec} in
each constraint and using standard robust optimization reformulation
techniques, the health model can be transformed into a deterministic robust
counterpart (see Appendix~\ref{sec:appa}).

Consider a deterministic version of Problem~\ref{eq:rob} in which cost, process
model, and maintenance model are not functions of $s_{j,t}\left(\tilde{d}_{j,k}\right)$ and
$\tilde{d}_{j,k}$ has been replaced by $d^{max}_{j,k} = \max_{\mathcal{U}} \tilde{d}_{j,k}$:
\begin{equation}
    \label{eq:det}
    \begin{aligned}
        &\min_{\bm{x}, \bm{m}}
        && \text{cost}(\bm{x}, \bm{m})
        &   &   &           \\
        & \text{s.t}
        && \text{process model}(\bm{x}, \bm{m})
        &   &   &           \\
        &
        && \text{maintenance model}(\bm{x}, \bm{m})
        &   &   &           \\
        &
        && m_{j,t} s_j^0 \leq s_{j,t},
        &   &   & \forall t, j \in J\\
        &
        && s_{j,t} \leq s_j^{max} + m_{j,t}(s_j^0 - s_j^{max}),
        &   &   & \forall t, j \in J\\
        &
        && s_{j,t} \geq s_{j,t-1} + \sum_{k}x_{j,k,t}d_{j,k}^{max} + m_t{j,t}(s_j^0-s_j^{max}),
        &   &   & \forall t, j \in J\\
        &
        && s_{j,t} \leq s_{j,t-1} + \sum_{k}x_{j,k,t}d_{j,k}^{max},
        &   &   & \forall t, j \in J,
    \end{aligned}
\end{equation}
where $s_{j,t}$ is not a second stage variable anymore since there are no more
semi-infinite constraints.
Under certain circumstances, feasible solutions to robust Problem~\ref{eq:rob} can be found by solving deterministic Problem~\ref{eq:det}:
\begin{theorem}
    \label{theo:rob-det}
    Given that cost, process model, and maintenance model are not functions of
    $\tilde{d}_{j,k}$ and that $s^0_j \leq s^{init}_j = s_{j,t=t_0} \leq s^{max}_j$ and $\tilde{d}_{j,k} \geq 0, \forall \tilde{d}_{j,k} \in \mathcal{U}$, then a feasible solution $(\bm{x} = [x_{k,t},\ldots], \bm{m}=[m_t,\ldots], \bm{h} = [s_t])$ to Problem~\ref{eq:det} forms a feasible solution $(\bm{x} = [x_{k,t},\ldots], \bm{m}=[m_t,\ldots], \bm{h} = [[s_t]_0, [s_t]_k])$ to Problem~\ref{eq:rob} with
\begin{subequations}
    \begin{align}
        [s_t]_0 &=
        \begin{cases}
            s^{init} & t < t_{m,0}    \\
            s^{0}    & t \geq t_{m,0}
        \end{cases}\\
        [s_{t}]_k &= \sum_{t'=t_{m,t}}^t{x_{k,t}},
    \end{align}
\end{subequations}
where $s^{init} = s(t=0)$, $t_{m,0}$ is the first point in time at which maintenance is performed, and $t_{m,t}$ is the most recent point in time at which maintenance was performed.
\end{theorem}
\begin{proof}
    See Appendix~\ref{sec:appb}
\end{proof}
Theorem~\ref{theo:rob-det} only guarantees solution feasibility, not
optimality. How well Problem~\ref{eq:det} approximates Problem~\ref{eq:rob}
also depends on the selected uncertainty set.

\subsection{The uncertainty set}
A major decision in robust optimization is the uncertainty set choice.
This paper uses a simple box uncertainty set
\begin{equation*}
    \mathcal{U} = \{\tilde{d}_{j,k}|\bar{d}_{j,k}(1-\epsilon_{j,k}) \leq
    \tilde{d}_{j,k} \leq \bar{d}_{j,k}(1+\epsilon_{j,k})\},
\end{equation*}
where $\bar{d}_{j,k}$ is the nominal value of $\tilde{d}_{j,k}$ and $\epsilon_{j,k}$ is a parameter determining the uncertainty set size.
Note that this choice assumes that the random increments $D_{j,k}$
are independent, as a box uncertainty set cannot capture
correlation between uncertain parameters. This assumption could be relaxed with
a more complicated uncertainty set, e.g., a polyhedral set.
Since Degradation Modeling assumes that the distribution of $D_{j,k}$ is known, $\epsilon_{j,k}$ can be determined using the inverse cumulative distribution function~$F^{-1}$:
\begin{equation*}
    \epsilon_{j,k} = 1 - F^{-1}(\alpha)/\bar{d}_{j,k},
\end{equation*}
where $\alpha = P(D_{j,k} \leq \bar{d}_{j,k}(1-\epsilon_{j,k}))$.
If the distribution of $D_{j,k}$ is unknown, data-driven non-parametric methods such as Kernel Density Estimation can be used to estimate it \cite{Ning2017}.

By using $F^{-1}$, the uncertainty set size depends on a single parameter $\alpha \in [0, 0.5]$.
For $\alpha=0$, the uncertainty set includes all possible realizations of $D_{j,k}$ and for $\alpha=0.5$ the uncertainty set is a singleton and the robust optimization problem is equivalent to the deterministic problem using the nominal values $\bar{d}_{j,k}$.
While box uncertainty sets are often more conservative than most of the many
other available uncertainty set types\cite{Guzman2015,Li2011}, the solution
robustness/conservatism in this formulation can be varied by adjusting
$\alpha$.

\subsection{Evaluating robustness}
Assume $\boldsymbol{x}^k_j = [k_1, k_2, \ldots, k_{T}]$, where $k_t = k \iff x_{j,k,t} = 1$, is the sequence of operating modes given by a solution to Problem~\ref{eq:rob}.
Its robustness can be measured by the probability $p_j$ that unit $j$ does not fail in the time horizon $T$
\begin{equation*}
    p_j = P(S_{j,t} \leq S_{j,max}, \forall t < T | \boldsymbol{x}^k_j),
\end{equation*}
or equivalently its probability of failure
\begin{equation*}
    p^f_j = 1-p_j = P(\exists t  \text{ such that } S_{j,t} > S_{j,max} | \boldsymbol{x}^k_j).
\end{equation*}
Assuming the parameters $\boldsymbol{\theta}_{j,k}$ of the distributions $\mathcal{D}_{j,k}$ are estimated from data, $p^f_j$ can be calculated through Monte-Carlo simulation by randomly generating $N$ realizations $\boldsymbol{s}^n_j = [ s^n_{j,0}, s^n_{j,\Delta t}, \ldots, s^n_{j,T}]$ of $S_j(t,\boldsymbol{x}^k_j)$ with $\mathcal{D}_{j,k}(\boldsymbol{\theta}_{j,k}, \Delta t)$ distributed increments and checking how many violate $s^n_{j,t} \leq s^{max}_j$:
\begin{equation}
    \label{eq:pfail}
    p^f_{j} = \frac{\sum_{n=1}^N\mathbbm{1}(\exists t<T \text{ such that }
        s^n_{j,t} > s^{max}_j)}{N},
\end{equation}
where $\mathbbm{1}$ is the indicator function. This is illustrated in Fig.~\ref{fig:calcp} for $N=3$.
\begin{figure}[p]
    \centering
    \includegraphics[scale=0.98]{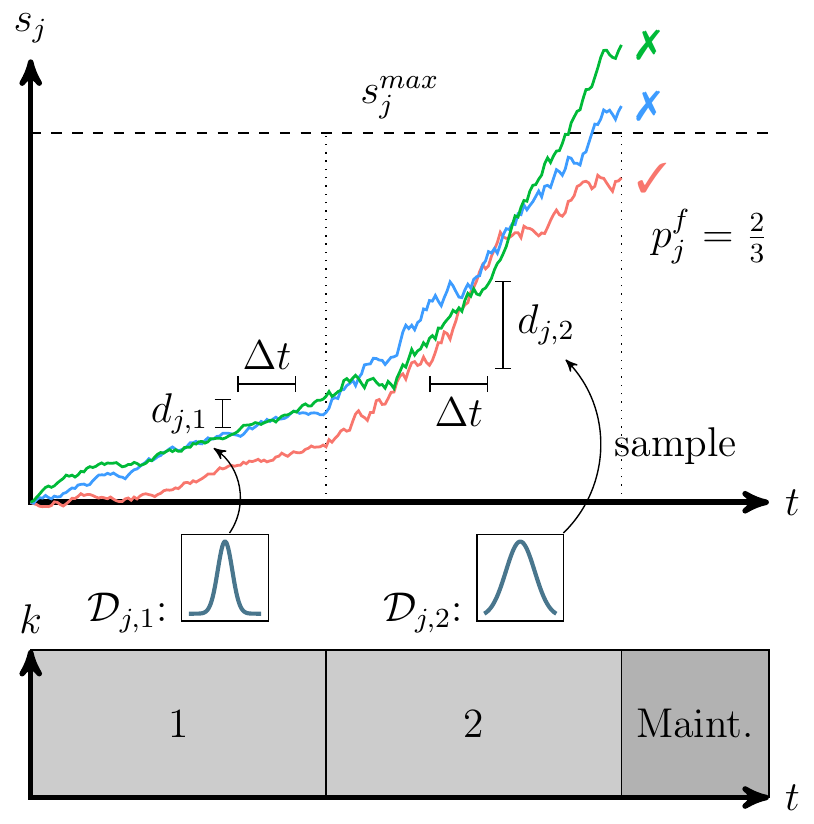}
    \caption{Example for calculating the failure probability $p^f_j$ using
        Eqn.~\ref{eq:pfail} and Monte-Carlo simulation. The operating mode
    schedule is $\boldsymbol{x}^{k} = [1, 2, \textrm{Maint.}]$.}
    \label{fig:calcp}
\end{figure}
\begin{figure}[p]
    \centering
    \includegraphics[scale=0.98]{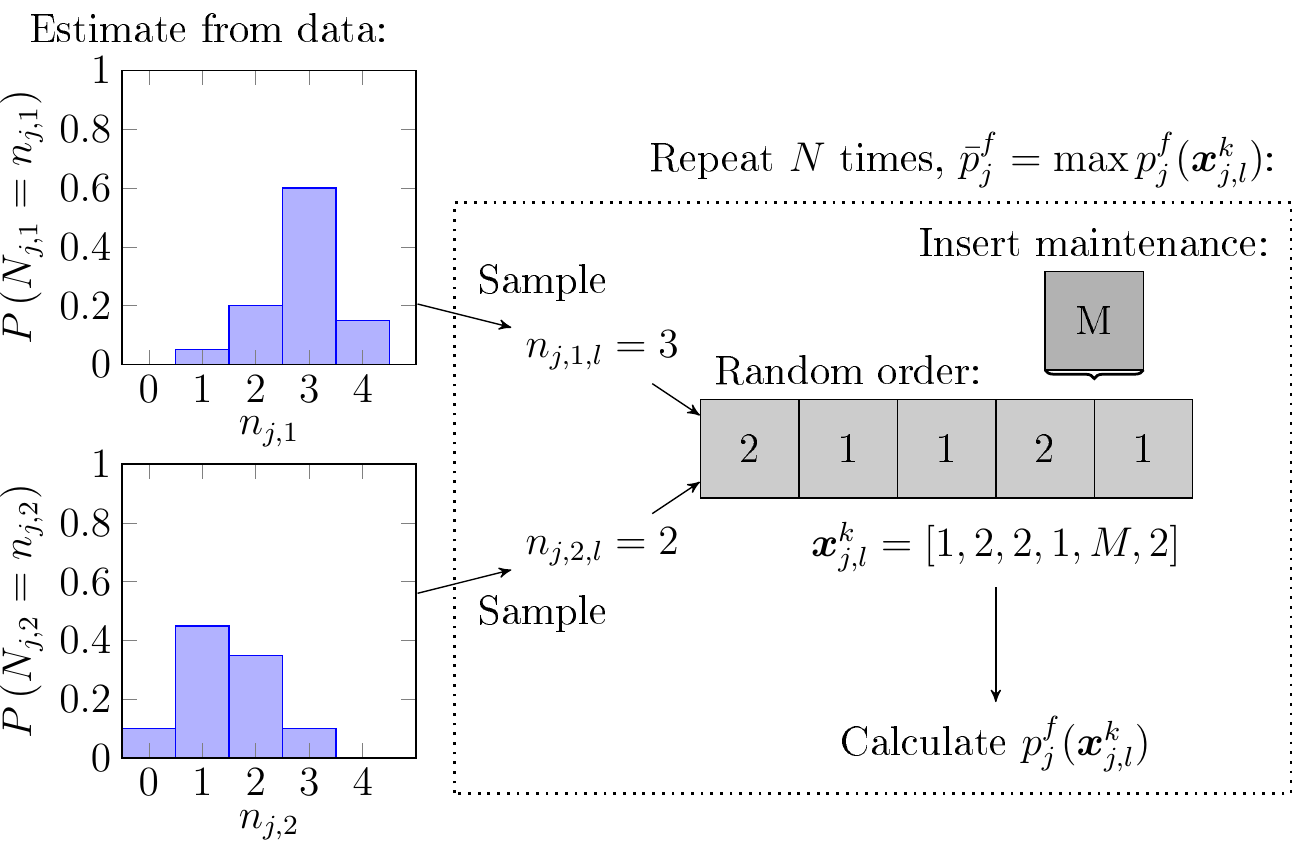}
    \caption{Frequency approach: Estimating $p^f_j$ from historical data using
    Algorithm~\ref{algo:freq}.}
    \label{fig:freq}
\end{figure}

In the special case where $D_{j,k}$ is normal distributed $\mathcal{D}_{j,k}
\sim \mathcal{N}(\mu_{j,k}\Delta t, \sigma^2_{j,k}\Delta t)$, i.e., the
Wiener process model is used, $p^f_j$ can be efficiently calculated using analytical results for the crossing probability of a Brownian motion on a piecewise linear boundary \cite{Poetzelberger1997, Bian2011}.
Instead of sampling from $\mathcal{D}_{j,k}$ at regular time intervals $\Delta t$, this approach only randomly samples at operating mode transitions ($k_t \neq k_{t+1}$).
It requires far less Monte-Carlo samples and is therefore faster than the general method outlined above.
A detailed description of this approach is given in the
Appendix~\ref{sec:apppoetzel}.

\subsection{Estimating failure probabilities}
\label{sec:mc}
Evaluating the probability of failure $p^f_j$, e.g.,using Eqn.~\ref{eq:pfail}, requires the exact sequence of operating modes $\boldsymbol{x}^k_j$ and maintenance actions to be known over the evaluation horizon $T$.
Since maintenance tends to be infrequent, $T$ has to be sufficiently long to obtain meaningful failure probabilities.
Solving Problem~\ref{eq:rob} over a long time horizon may be computationally challenging.
Instead, it may be possible to use existing data of past schedules to estimate $p_j^f$.
If no historical data is available, it can be generated by solving Problem~\ref{eq:rob} over a shorter horizon.
This section outlines two methods by which an upper estimate of $p_j^f$ can be obtained from data.

\subsubsection{Frequency approach}

Assuming time discretization, a conceptually easy way to obtain an upper bound $\bar{p}_j^f$ on $p_j^f$ is to generate the set $\mathcal{X}$ of all possible permutations of operating mode sequences $\boldsymbol{x}^k_j = [k_{j,1}, k_{j,2}, \ldots, k_{j,T}]$ and find the maximum probability of failure
\begin{equation*}
\bar{p}_j^f = \max_{\boldsymbol{x} \in \mathcal{X}} p_j^f(\boldsymbol{x}^k_j).
\end{equation*}
For any realistic problem $\mathcal{X}$ will be very large, but there are two ways to reduce its size:
First, the operating mode sequences can be generated without considering maintenance.
Maintenance actions can then be inserted consecutively at the latest point in time $t_{m,l}$ which satisfies
\begin{equation}
    \label{eq:crit-m}
    \max_{\tilde{d}_{j,k} \in \mathcal{U}} \sum_{t'=t_{m,l-1}}^{t_{m,l}}\sum_k
    x_{j,k,t'}\tilde{d}_{j,k} < s_j^{max} - s_j^0,
\end{equation}
where $t_{m,l-1}$ is the previous maintenance activity and $t_{m,0} = 0$.
$\bar{p}_j^f$ remains an upper bound, because maintenance at a later point in time always causes a larger probability of failure.
Secondly, it may be possible to estimate the frequency of occurrence $n_{j,k} = \sum_t{x_{j,k,t}}$ of each operating mode $k$ from data.
If these frequencies are modeled as random variables $N_{j,k}$, a smaller $\mathcal{X}$ can be obtained by only generating sequences which obey frequencies drawn from the distributions of $N_{j,k}$.
This suggests the following algorithm for obtaining an estimate of $\bar{p}_j^f$ which is also visualized in Fig.~\ref{fig:freq}:
\begin{algorithm}
    \caption{Frequency approach [illustrated in Fig.~\ref{fig:freq}]}
    \label{algo:freq}
    \begin{algorithmic}[1]
        \Procedure{estimate $\bar{p}^f_j$}{}
        \State $\eta_{n_{j,k}} = P(N_{j,k} = n_{j,k}) \gets$ estimate from historical data $\forall n_{k}$
        \State $l \gets 1$
        \While{$l \leq N$}
            \State ${n_{j,k,l}} \gets$ draw random sample from $P(N_{j,k} = n_{j,k})$ for each $k$
            \State $\boldsymbol{x}^k_{j,l} \gets$ arrange $n_{j,l} =\sum_k{n_{j,k,l}}$ op.~modes in random order $[k_1, k_2, \ldots, k_{n_{j,l}}]$
            \State $\boldsymbol{x}^k_{j,l} \gets$ insert maintenance at last possible points in time \Comment{Eqn.~\ref{eq:crit-m}}
            \State $p_{j,l}^f \gets p_{j}^f(\boldsymbol{x}^k_{j,l})$\Comment{Eqn.~\ref{eq:pfail}}
        \EndWhile
        \State $\bar{p}_{j}^f \gets \max_{l \leq N} {p_{j,l}^f}$
        \EndProcedure
    \end{algorithmic}
\end{algorithm}

If $N$ is large enough and the estimated distribution of $N_{j,k}$ is accurate, $\bar{p}_j^f$ should be a good upper bound on $p_j^f$.

\subsubsection{Markov chain approach}
\label{sec:mc-approach}

The second approach for estimating $\bar{p}^f_j$ is inspired by the use of
Markov chains in regime-switching models in finance and to some extent also in
the CBM literature for modeling different enviromental or operating regimes of
a process \cite{Chevallier2017, Breuer2012, Ozekici1995, Li2016}.
The key idea is to treat the occurrence of operating modes $k$ over time as a Markov chain.
Modeling the sequence of operating modes $\boldsymbol{x}^k_j$ on a unit by a memoryless Markov chain $X^k_j(t) = \{X^k_{j,t} : t \leq T\}$, the probability $\pi_{k,k^*}$ of transitioning from one operating mode $k$ to another $k^*$ is given by
\begin{equation*}
    \pi_{k,k^*} = P(X^k_{j,t}=k^*|X^k_{j,t-1}=k).
\end{equation*}
The transition probabilities $\pi_{k,k^*}$ can be estimated
from data.

From this Markov chain random sequences of operating modes $\boldsymbol{x}^k_{j,l}$ can be generated.
Maintenance can again be inserted at the latest possible point in time according to Eqn.~\ref{eq:crit-m}.
$\boldsymbol{x}^k_{j,l}$ may not be a feasible solution to
Problem~\ref{eq:rob}, but it can be used to estimate $\bar{p}_j^f$.
The approach is summarized in Algorithm~\ref{algo:mc}:
\begin{algorithm}
    \caption{Markov chain approach}
    \label{algo:mc}
    \begin{algorithmic}[1]
        \Procedure{estimate $\bar{p}^f_j$}{}
        \State $\pi_{k,k^*} = P(X^k_{j,t}=k^*|X^k_{j,t-1}=k) \gets$ estimate from
        historical data $\forall (k, k^*)$
        \State $l \gets 1$
        \While{$l \leq N$}
            \State $\boldsymbol{x}^k_{j,l} \gets$ draw random operating mode
            sequence from Markov chain $\pi_{k,k^*}$
            \State $\boldsymbol{x}^k_{j,l} \gets$ insert maintenance at last possible points in time \Comment{Eqn.~\ref{eq:crit-m}}
            \State $p_{j,l}^f \gets p_{j}^f(\boldsymbol{x}^k_{j,l})$\Comment{Eqn.~\ref{eq:pfail}}
        \EndWhile
        \State $\bar{p}_{j}^f \gets \max_{l \leq N} {p_{j,l}^f}$
        \EndProcedure
    \end{algorithmic}
\end{algorithm}

\subsubsection{Logistic regression}
\label{sec:lr}
The optimal sequence of operating modes $\boldsymbol{x}^{k,*}_{j}$ depends not only on the structure of the process and the size of the uncertainty set $\mathcal{U}(\alpha)$, but also on parameters $\boldsymbol{\psi}(t)$ such as product demands or environmental variables.
The distributions of $N_{k}$ and $\pi_{k,k^*}$ are therefore not necessarily stationary:
\begin{subequations}
\begin{align}
    \eta_{n_{j,k}}\left(\boldsymbol{\psi}(t)\right)
    &= P(N_{j,k} = n_{j,k}|\boldsymbol{\psi})\\
    \pi_{k,k^*}\left(\boldsymbol{\psi}(t)\right)
    &= P(X^k_{j,t}=k^*|X^k_{j,t-1}=k,\boldsymbol{\psi}),
\end{align}
\end{subequations}
where $\eta_{n_{j,k}}(\boldsymbol{\psi})$ is the probability that operating mode $k$ occurs $n_{j,k}$ times in time period $\Delta t$.

\begin{figure}[htbp]
    \centering
    \includegraphics{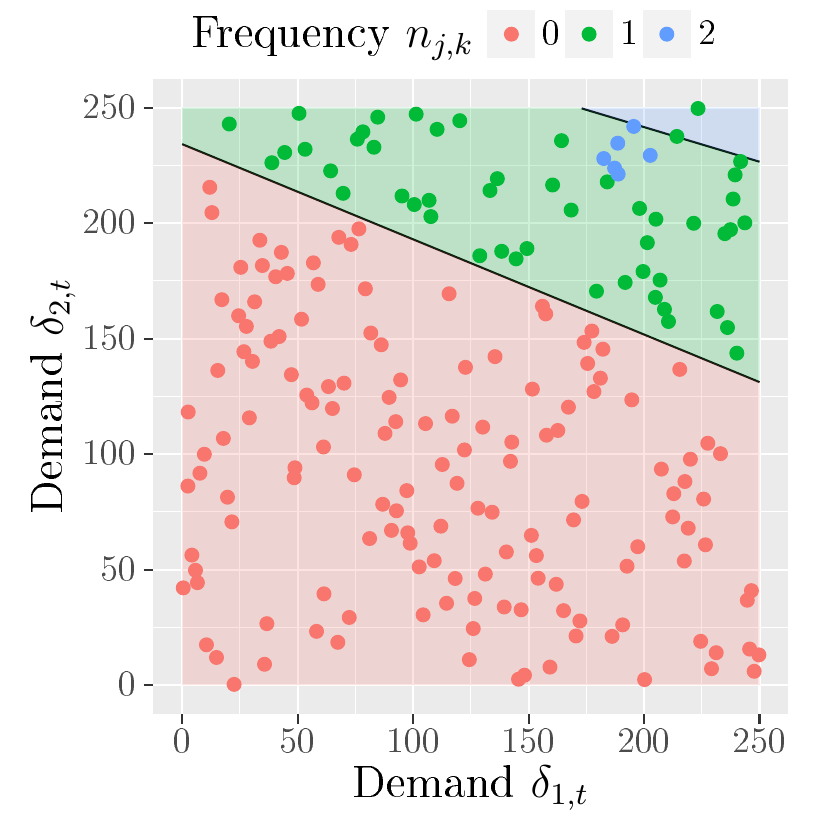}
    \caption{Frequency $n_{j,k}$ of operating mode $k$ occuring on unit $j$ for
    a scheduling problem with two product demands $\boldsymbol{\psi} =
[\delta_{1,t}, \delta_{2,t}]^{\top}$. Points are training data generated by
solving the scheduling model and shaded areas are predictions by logistic
regression.}
    \label{img:logreg-toy}
\end{figure}
Covariate dependency of Markov chain transition probabilities has previously been modeled by using logistic regression \cite{Paton2014,Sinha2011}.
We model both $\eta_{n_{j,k}}$ and $\pi_{k,k^*}$ using multinomial logistic regression in order to capture the influence of product demands:
\begin{subequations}
\begin{align}
    \label{eq:logreg}
    \eta_{n_{j,k}}(\boldsymbol{\psi}(t)) = &
    \frac{\exp(\boldsymbol{\beta^{\top}}_{n_{j,k}}\boldsymbol{\psi})}{\sum_{n'_{k}}
    \exp(\boldsymbol{\beta^{\top}}_{n'_{j,k}}\boldsymbol{\psi})}\\
    \pi_{k,k^*}(\boldsymbol{\psi}(t)) =           &
    \frac{\exp(\boldsymbol{\beta^{\top}}_{k,k^*}\boldsymbol{\psi})}{\sum_{k^+ \in
    K}\exp(\boldsymbol{\beta^{\top}}_{k,k^+}\boldsymbol{\psi})}.
\end{align}
\end{subequations}
We use Scikit-learn\cite{sklearn} for estimating parameters
$\boldsymbol{\beta}$ based on data.
Fig.~\ref{img:logreg-toy} shows an example for a process with two product demands $\boldsymbol{\psi} = [\delta_{1,t}, \delta_{2,t}]^{\top}$.
The shaded areas are the frequencies $n_{j,k}$ predicted by logistic regression for a particular $k$ and $j$ (the $n_{j,k}$ with the largest $\eta_{n_{j,k}}(\boldsymbol{\psi})$) while the points are training data.
We use logistic regression in this work because of its simplicity and interpretability, but it could be replaced by any classification method capable of probability estimation, e.g., Artificial Neural Networks, Support Vector Machines, k-Nearest Neighbours, Decision Trees, etc\cite{Bravo2010, Dreiseitl2002}.

\section{Optimizing the uncertainty set size}
An important, non-trivial decision when using robust optimization is the size
of the uncertainty set --- or in this work the choice of parameter $\alpha$.
It governs a trade-off between the robustness of the solution and its cost.
A common approach is to use a-priori guarantees to determine an uncertainty set size that is guaranteed to have a probability of constraint violation below a predefined level \cite{}.
A-priori guarantees are, however, not guaranteed to be tight and uncertainty sets based on them can be overly conservative.
As demonstrated by Li and Li \cite{Li2015, Li2015_2}, determining the optimal uncertainty set size can instead be seen as its own optimization problem.
They minimize the uncertainty set size with the constraint that the solution remains feasible with a pre-defined probability.
We propose a different formulation that does not require the decision maker to choose a probability of constraint satisfaction but is based purely on cost instead:
\begin{equation}
    \label{eq:optalpha}
    \min_{\alpha} c^*(\alpha) + \sum_{j} p^f_{j}(\alpha)\cdot c_{j}^f,
\end{equation}
where $c^*(\alpha)$ is the minimal overall cost of the process as determined by
solving Problem~\ref{eq:rob} for a given value of $\alpha$, $p^f_j$ is the
corresponding probability of failure evaluated using Eqn.~\ref{eq:pfail}, and
$c^f_j$ is the cost incurred in case of an unplanned failure of unit $j$,
i.e., the cost of corrective maintenance.
Effectively, Problem~\ref{eq:optalpha} minimizes the trade-off between preventive and corrective maintenance.
Note that this formulation assumes that each unit fails no more than once in the evaluated horizon $T$.
This is reasonable under the assumption that the cost of failure $c^f_j$ is high and therefore $P^f_j$ tends to be low.

Problem~\ref{eq:optalpha} is a one-dimensional optimization problem, but determining $c^*(\alpha)$ and $p^f_j(\alpha)$ can be computationally expensive because it requires solving a potentially large MILP problem and Monte-Carlo simulation.
It can therefore be viewed as a black box optimization problem with expensive function evaluations.
We propose Bayesian optimization, which is known to work well on expensive low dimensional objective functions, as an effective solution strategy \cite{Jones1998}.
Bayesian optimization has the further advantage that it can handle noise well.
Both $c^*$ and $p^f_j$ can be noisy because it may not be possible to solve Problem~\ref{eq:rob} to optimality in a reasonable time frame.
Further noise is introduced by the Monte-Carlo simulation used to evaluate $p^f_j$.

\section{Case study}

\begin{figure}[htbp]
    \centering
    \includegraphics[width=\textwidth]{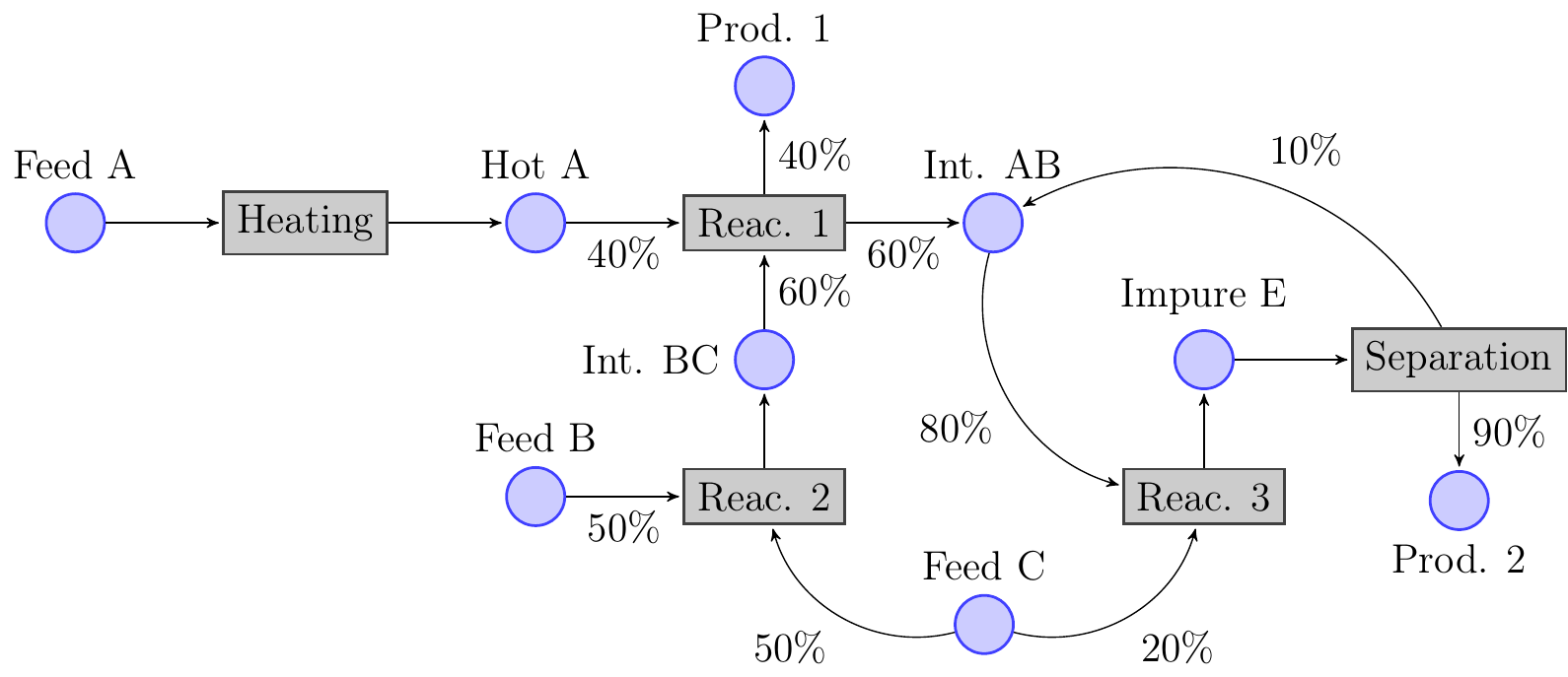}
    \caption{STN instance proposed by \citet{Kondili1993}. Tasks are performed
    on four units: Heater, Reactor~1, Reactor~2, and Still.}
    \label{fig:kondili}
\end{figure}
The model by \citet{Biondi2017}, an extension of the State-Task-Network (STN) \cite{Kondili1993}, forms the basis of our case study.
The classic STN is a scheduling problem in which a set of tasks $I$ has to be assigned to a set of units $J$.
\citet{Biondi2017} extend the STN by allowing each task $i$ to be performed in a number of different operating modes $k \in K_i$.
They add constraints reducing the residual lifetime $r_{j,t}$ of each unit $j$ every time a task is performed, and restore $r_{j,t}$ by performing maintenance.
Because the scheduling problem can only be solved for a short time horizon $T_S$ but maintenance occurs infrequently, they add a planning horizon $T_P$ to the problem.
For the planning horizon, instead of an exact schedule, only the number of times $n_{i,j,k,t}$ a task $i$ is performed on unit $j$ in operating mode $k$ in each planning period $t$ is calculated.
Eqns.~\ref{stn:obj} to~\ref{stn:last} give the modified version used as a case study in this work:

\allowdisplaybreaks
Objective function:
\begin{align}
    \begin{split}
    \text{cost} = \sum_{j \in J} c_j^{maint}\left(s_j^{fin}/s_j^{max} + \sum_{t \in
    T} m_{j,t}\right)\\ + c_s^{storage}\left(q_s^{fin} + \sum_{t \in T_p}
    q_{s,t}\right) \\
    + U\left(\sum_{s \in S} \phi^d_s + \sum_{t \in T_S} \phi^q_{s,t}\right)
    \end{split} \label{stn:obj}
\end{align}

Constraints scheduling horizon:
\begin{subequations}
\begin{align}
    & \sum_{k \in K_j} \sum_{i\in I_j} \sum_{t'=t-p_{i,j,k}+\Delta t_S}^t w_{i,j,k,t'} + \sum_{t'=t-\tau_j+\Delta t_s}^t m_{j,t'} \leq 1
    &
    & \forall J, t \in T_S                                                    \\
    & v^{min}_{i,j}w_{i,j,k,t} \leq b_{i,j,k,t} \leq v^{max}_{i,j}w_{i,j,k,t}
    &
    & \forall J, i \in I_j, k \in K_j, t \in T_S                            \\
    \begin{split}
        q_{s,t} = q_{s,t-1} + \sum_{i \in \bar{I}_s} \bar{\rho}_{i,s}\sum_{j \in
        J_i}\sum_{k \in K_j} b_{i,j,k,t-p_{i,j,k}}                    \\
        - \sum_{i \in I_s}\rho_{i,s}\sum_{j\in J_i}\sum_{k\in K_j}b_{i,j,k,t}
    \end{split}
    &
    & \forall s, t \in T_S                                                    \\ \label{eq:slack1}
    & 0 \leq q_{s,t} - \phi^q_{s,t}\leq c_s
    &
    & \forall s, t \in T_S                                                    \\
    & m_{j,t} s_{j}^{0} \leq s_{j,t} \leq s_{j}^{max} + m_{j,t} \cdot (s_{j}^{0} -
    s_{j}^{max})
    &
    & \forall t, j \in J, D \in \mathcal{U}                                    \\
    & s_{j,t} \geq s_{j,t-\Delta t_S} + \sum_{i}\sum_{k}{w_{i,j,k,t}\tilde{d}_{j,k}}
    + m_{j,t} \cdot (s_{j}^{0} - s_{j}^{max})
    &
    & \forall t, j \in J, D \in
    \mathcal{U}                                            \\
    & s_{j,t} \leq s_{j,t-\Delta t_S} + \sum_{i}\sum_{k}{w_{i,j,k,t}\tilde{d}_{j,k}}
    &
    & \forall t, j \in J, D \in \mathcal{U},
\end{align}
\end{subequations}

Constraints planning horizon:
\begin{subequations}
\begin{align}
    &\sum_{i \in I_j}\sum_{k \in K_j} p_{i,j,k} n_{i,j,k,t} + \tau_j m_{j,t} \leq \Delta t_P
    &
    & \forall J, t \in T_{P}\backslash{\{\bar{t}_P\}} \\
    &v_{i,j}^{min}\sum_{k\in K_j} n_{i,j,k,t} \leq a_{i,j,t} \leq
    v_{i,j}^{max}\sum_{k\in K_j} n_{i,j,k,t}
    &
    & \forall J, i \in I_j, k \in
    K_j, t \in T_P\\
    &q_{s,t} = q_{s,t-1} + \sum_{i \in \bar{I}_s}\bar{\rho}_{i,s} \sum_{j \in
    J_i}a_{i,j,t} - \sum_{i \in I_s} \rho_{i,s} \sum_{j \in J_i} a_{i,j,t}
    - \delta_{s,t}
    &
    & \forall s, t \in T_P \backslash\{\bar{t}_P\}    \\
    & 0 \leq q_{s,t} \leq c_s
    &
    & \forall s, t \in T_P                            \\
    & n_{i,j,k,t} \leq U \cdot \omega_{j,k,t}
    &
    & \forall J, i \in I_j, k \in K_j, t \in T_P      \\
    &\sum_{k \in K_j} \omega_{j,k,t} = 1
    &
    & \forall J, t \in T_P                            \\
    & s_{j,t} \leq s_{j}^{max}
    &
    & \forall t, j \in J                              \\
    & s_{j}^{t} \geq s_{j,t-\Delta t_P} + \sum_{k}{n_{j,k,t}\tilde{d}_{j,k,t}} +
    m_{j,t} \cdot (s_{j}^{0} - s_{j}^{max})
    &
    & \forall t, j \in J                              \\
    & s_{j,t} \leq s_{j,t-\Delta t_P} + \sum_{k}{n_{j,k,t}\tilde{d}_{j,k,t}}
    &
    & \forall t, j \in J
\end{align}
\end{subequations}

Constraints interface between scheduling and planning:
\begin{subequations}
\begin{align}
    \begin{split}\label{eq:int1}
        \sum_{k \in K_j} \sum_{i \in I_j} \sum_{t'=\bar{t}_S+2\Delta t_S-p_{i,j,k}}^{\bar{t}_S} w_{i,j,k,t'} \left[p_{i,j,k} - (\bar{t}_S - t' + \Delta t_S)\right]    \\
        + \sum_{t'=\bar{t}_S+2\Delta t_S-\tau_j}^{\bar{t}_S} m_{j,t'}\left[\tau_{j} - (\bar{t}_S - t' + \Delta t_S)\right]    \\
        + \sum_{i \in I_j} \sum_{k \in K_j} p_{i,j,k}n_{i,j,k,\bar{t}_P} + \tau_jm_{j,\bar{t}_P} \leq \Delta t_P,
    \end{split}
    &
    & \forall j \in J                                       \\
    \begin{split}\label{eq:slack2}
    q^{fin}_s = q_{s,\bar{t}_S} + \sum_{i\in \bar{I}_s} \bar{\rho}_{i,s}\sum_{j \in
    J_i}\sum_{k \in K_j}b_{i,j,k,\bar{t}_S+1-p_{i,j,k}}\\ - \delta_{s,\bar{t}_S}
    + \phi^d_{s}
    \end{split}
    &
    & \forall s                                       \\
    & 0 \leq q^{fin}_s \leq c_s
    &
    & \forall s                                       \\
    \begin{split}q_{s,\bar{t}_P} = q^{fin}_s + \sum_{i \in \bar{I}_S}
    \bar{\rho}_{i,s} \sum_{j \in J_i} \sum_{k \in K_j} \sum_{t'=\bar{t}_s +
    2 - p_{i,j,k}}^{\bar{t}_S} b_{i,j,k,t'} \\ + \sum_{i \in \bar{I}_S}
    \bar{\rho}_{i,s}\sum_{j,J_i}a_{i,j,\bar{t}_P}\\
    - \sum_{i,I_s}\rho_{i,s}\sum_{j \in J_i}a_{i,j,\bar{t}_P} - \delta_{s,\bar{t}_P}
    \end{split}
    &
    & \forall s              \label{stn:last}
\end{align}
\end{subequations}
The decision variables are $m_{j,t}, q_{s,t}, w_{i,j,k,t}, n_{i,j,k,t},
b_{i,j,k,t}, a_{i,j,t}, s_{j,t}, \phi^q_{s,t}, \phi^d_s$ and $\omega_{j,k,t}$.
The product demand $\delta_{s,t}$ has to be
    satisfied at the end of each planning period and at the end of any time interval in the scheduling horizon.
In practice, this model would be solved regularly in a rolling horizon fashion
using recent demand estimates and degradation signal measurements $s_{j,0}$.

In comparison to \citet{Biondi2017}, the residual lifetime constraints have been replaced with the degradation signal based health model developed above (Eqn.~\ref{eq:rob}).
For the planning horizon, $s_{j,t}$ cannot be reset exactly to $s_j^0$, because the exact time at which maintenance is performed is unknown.
Instead, it is merely enforced that $s_{j,t} \leq s_{j}^{max}$.

In addition, the objective function is slightly different.
The term $c_j^{maint}(s_j^{fin}/s_j^{max})$ can be interpreted as a final cost of maintenance dependent on the final degradation signal $s_j^{fin}$ (state of health) of unit $j$.
Similar to Biondi et al.'s\cite{Biondi2017} penalty terms it avoids unnecessary degradation and ensures maintenance happens towards the end of a units residual lifetime.

Since the exact sequence of operating modes and maintenance actions is unknown in the planning horizon $T_P$, the probability of failure $p^f_j$ can only be evaluated over the scheduling horizon $T_S$.
In order to still evaluate $p^f_j$ over a longer time period two possibilities exist:
the schedule can be extended in length by solving the model repeatedly in a rolling horizon fashion or the Markov chain-based estimation approach in Algorithm \ref{algo:mc} can be used.
We compare both approaches to show that the proposed Markov chain estimate is indeed accurate.
Note that, in order to facilitate a rolling horizon based solution approach, slack variables have been introduced in Eqns.~\ref{eq:slack1} and~\ref{eq:slack2}.
This is necessary because the rolling horizon framework does not guarantee feasibility in subsequent time periods. Production targets from the planning model may, for example, not be achievable in the scheduling model.
The slack variables $\phi^d_{s}$ and $\phi^q_{s,t}$ are penalized in the objective function.

We assume that the frequencies of operating mode occurrence $n_{j,k}$ and the transition probabilities $\pi_{k,k*}$ depend on the product demands in each planning period ($\boldsymbol{\phi}_t = [d_{s_1,t},\ldots,d_{s_n,t}]$).
We sample a range of demands using Latin Hypercube Sampling and solve just the scheduling horizon to generate data for estimating $n_{j,k}\left(\boldsymbol{\psi}_t\right)$ and $\pi_{k,k*}\left(\boldsymbol{\psi}_t\right)$.

Notice that the model above fulfills the assumptions in Theorem~\ref{theo:rob-det} and solutions can be obtained by solving the deterministic approximation~\ref{eq:det}.

\section{Results}
The framework outlined above was evaluated on five instances of the STN (see
Table~\ref{tab:instances} and Appendix~\ref{sec:appc}).
\begin{table}[htb]
    \centering
    \begin{tabular}{l r r r r r}
        \hline
        Instance            & Toy  & P1\cite{Kondili1993}   & P2\cite{Karimi1997}   & P4\cite{Maravelias2003}   & P6\cite{Ierapetritou1998}   \\ \hline
        Units               & 2    & 4    & 5    & 3    & 6    \\
        Tasks               & 3    & 5    & 3    & 4    & 8    \\
        Op.~modes           & 2    & 3    & 3    & 2    & 2    \\
        Products            & 2    & 2    & 1    & 2    & 4    \\ \hline
        Discrete vars       & 518  & 2492 & 1930 & 1869 & 1993 \\
        Continuous vars     & 1033 & 3630 & 2371 & 2777 & 4084 \\
        Constraints         & 1860 & 7332 & 5705 & 5699 & 7994 \\
        Avg.~MIP gap [\%]   & 0.0  & 3.0  & 5.8  & 10.9 & 1.02 \\
        \hline
    \end{tabular}
    \caption{Evaluated STN instances (Details: see Appendix~\ref{sec:appc})}
    \label{tab:instances}
\end{table}
The model was implemented in Pyomo\cite{Hart2017, Pyomo2011} and solved using CPLEX 12.7.1.0.
All source code is publicly available under the MIT Licence\cite{github}.
Unless mentioned otherwise, we considered an evaluation horizon of $12$ planning periods.
The failure probability $p^f_j$ for each unit $j$ was evaluated for a range of values of the uncertainty set parameter $\alpha$ using both the frequency and Markov chain estimates as well as rolling horizon.
The termination criteria for each CPLEX run were a maximum time limit between $1-5$ minutes (depending on the size of the instance) and a MIP gap of $2\%$ (except for the toy instance which was solved to optimality).
For each instance a low, average, and high demand scenario was considered with
the high scenario being close to maximum process capacity.
For Instance~P1\cite{Kondili1993} both the robust and deterministic Problems~\ref{eq:rob} and~\ref{eq:det} were solved a number of times to evaluate the quality of the deterministic approximation.
For all other instances only the deterministic approach was used.
All calculations were carried on an i7-6700 CPU with $8 \times 3.4\textrm{GHz}$ and 16GB RAM.

\begin{figure}[htbp]
    \centering
    \includegraphics{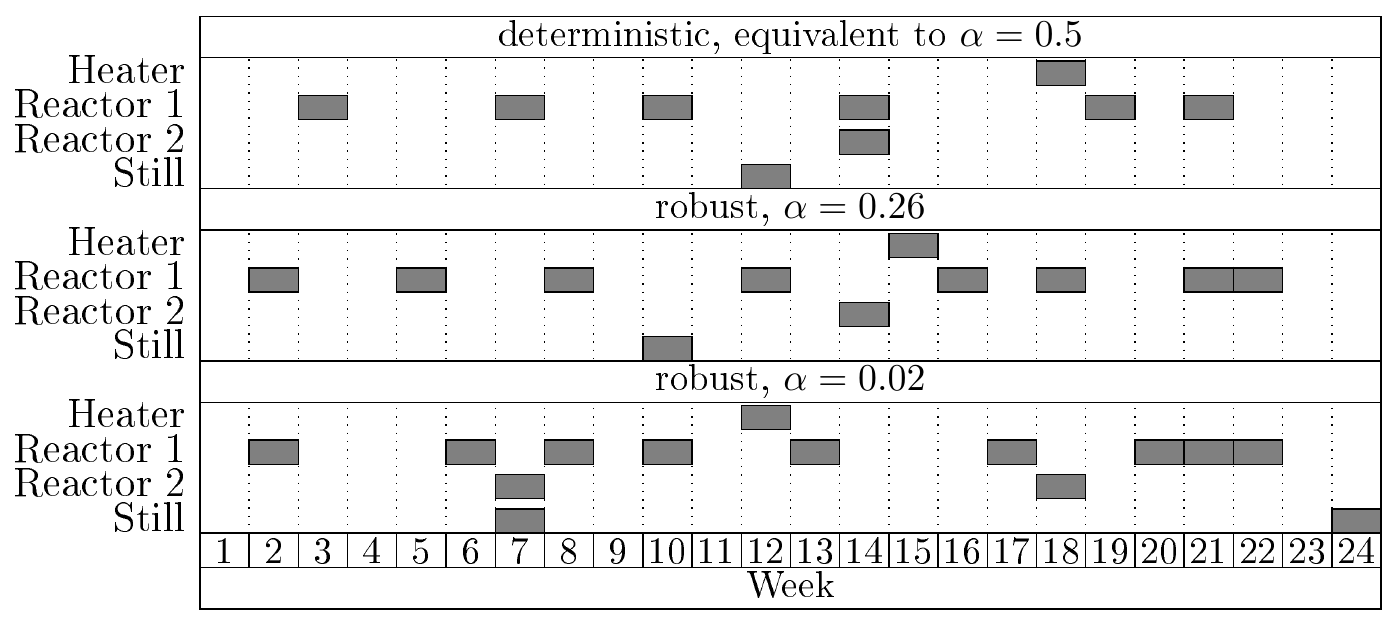}
    \caption{Comparison of maintenance schedules between deterministic solution
        ($\alpha = 0.5$) and two robust solutions with different values of
    $\alpha$ (Instance~P1\cite{Kondili1993}, average demand scenario
by Biondi et al.\cite{Biondi2017}). The number of required maintenance actions
increases with increasing uncertainty set size (decreasing $\alpha$).}
    \label{fig:gantt}
\end{figure}
    Fig.~\ref{fig:gantt} shows three maintenance schedules for the original STN
    instance~(P1) by \citet{Kondili1993} with Biondi et al.'s\cite{Biondi2017}
    demand scenario (average scenario) with different values for $\alpha$.
    The number of maintenance actions increases with
    increasing uncertainty set size (decreasing $\alpha$).
    Essentially, hedging against more uncertainty and ensuring solution
    robustness for a larger set of possible realizations requires earlier
    maintenance. In the average demand scenario, maintenance actions increase
    by $22\%$ when hedging against some of the uncertainty ($\alpha = 0.26$)
    and by $56\%$ when hedging against almost all uncertainty ($\alpha = 0.02$).
    However, this trend also depends on product demand: for the low demand
    scenario, only an increase of $25\%$ is necessary for $\alpha = 0.02$,
    while an increase of $64\%$ is necessary in the high demand scenario. A
    higher demand increases unit utilization and therefore also the absolute
    number of maintenance actions required in a given time period.

\begin{figure}[htbp]
    \centering
    \includegraphics[width=\textwidth]{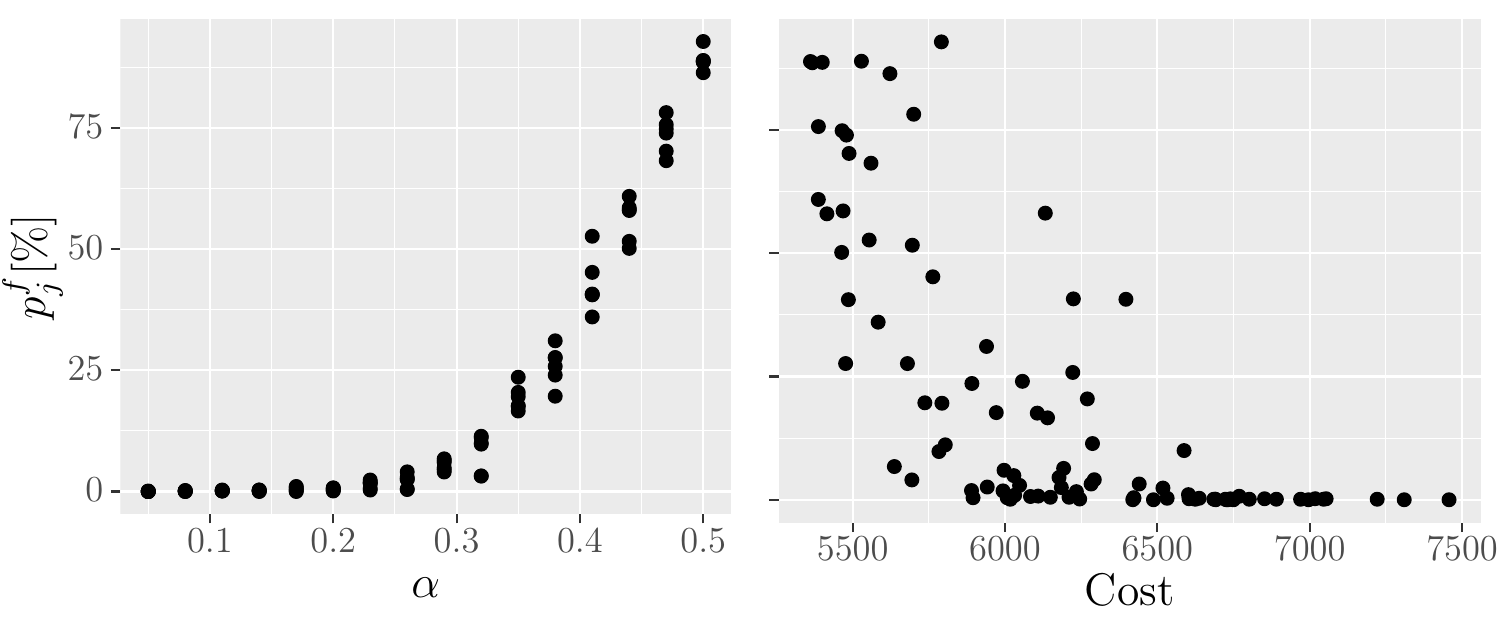}
    \caption{Probability of Reactor 1 failing $p^f_j$ vs uncertainty set
        parameter $\alpha$ and cost
        (Instance~P1\cite{Kondili1993}, average demand scenario). Each point is a solution to
        Problem~\ref{eq:det} obtained by CPLEX. The probability of failure
        $p^f_j$ was estimated using the Algorithm \ref{algo:mc} Markov chain approach.} \label{fig:p-vs-cost-R1}
\end{figure}
\begin{figure}[htbp]
    \centering
    \includegraphics[width=\textwidth]{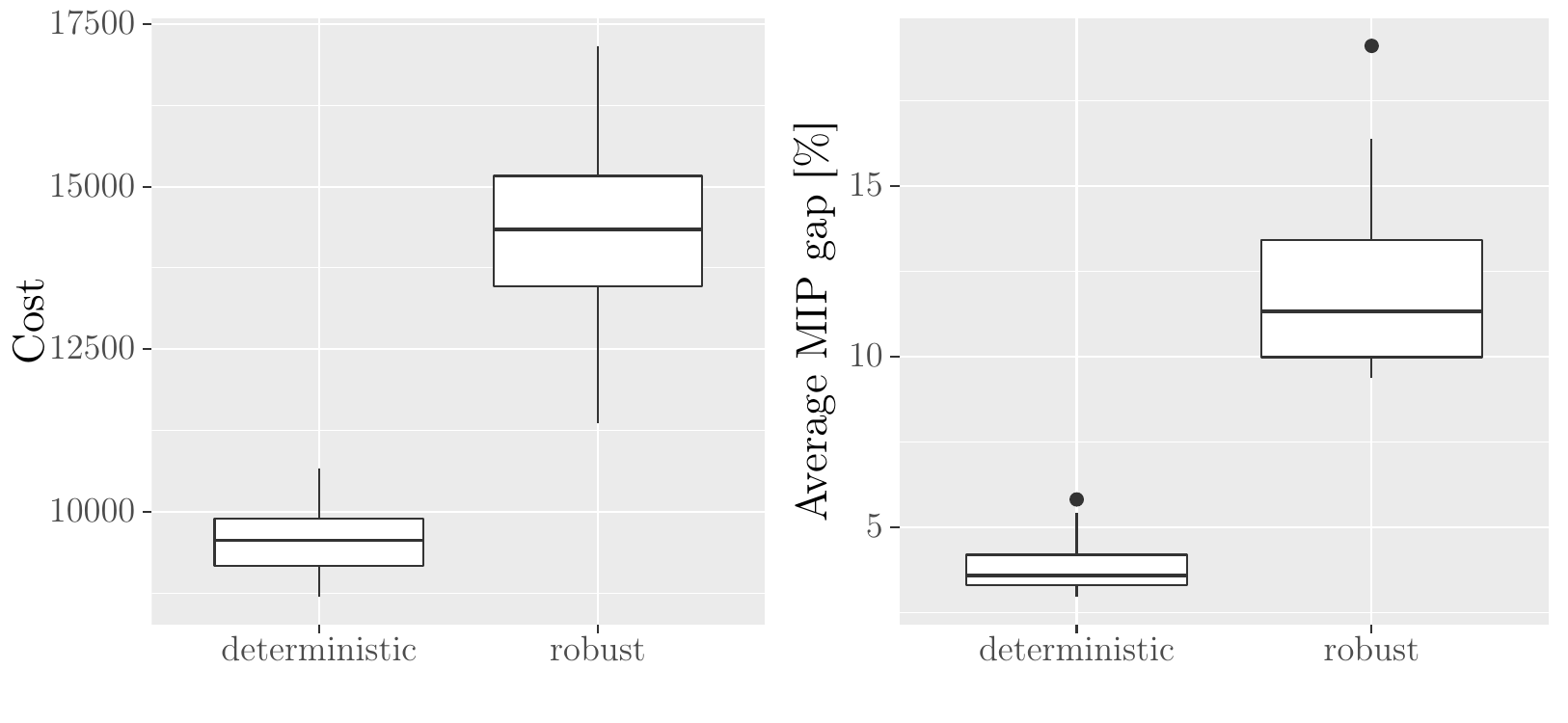}
    \caption{Robust vs.~deterministic approach (Instance~P1\cite{Kondili1993}).
    Each model was solved $25$ times with $\alpha=0.41$ and a $120s$ time limit
    per rolling horizon iteration
using out-of-the-box CPLEX. The robust model has 22514 variables and 14664
constraints while the deterministic model has 6122 variables and 7332
constraints.}
    \label{fig:det-vs-rob}
\end{figure}
Fig.~\ref{fig:p-vs-cost-R1} shows the failure probability $p^f_j$ for Reactor 1 as a function of both total cost (cost of storage and cost of maintenance) and the uncertainty set parameter $\alpha$.
As expected, $p^f_j$ increases for smaller uncertainty sets (large $\alpha$'s) and a low $p^f_j$ comes at a significant cost -- the price of robustness.
Notice that, while calculating $p^f_j$ using Monte-Carlo simulation only introduces modest noise, the calculated cost is very noisy due to the non-optimality of the solutions.

The results in Fig.~\ref{fig:p-vs-cost-R1} were obtained by solving the deterministic Problem~\ref{eq:det}.
While Theorem~\ref{theo:rob-det} guarantees that these solutions are also feasible in the robust Problem~\ref{eq:rob}, it does not prove that they are also optimal.
Fig.~\ref{fig:det-vs-rob} shows both total cost and average optimality gap for a number of rolling horizon solutions to the deterministic and robust version of {Instance~P1\cite{Kondili1993}}.
The uncertainty set size was $\alpha=0.41$ and a maximum time limit of $120s$ was used for each CPLEX run.
Within this time limit, out-of-the-box CPLEX achieves an average optimality gap of $12.1\%$ on the robust problem compared with $3.8\%$ for the deterministic approximation with maximum values for $\tilde{d}_{j,k}$.
Similarly, the deterministic approximation achieves significantly lower objective values.
While many approaches could improve solution quality of the robust problem,
e.g., solver parameter tuning or leveraging Satisfiability Modulo
Theory\cite{Mistry2018}, Constraint Programming\cite{Cire2016}, or
Approximation Algorithms\cite{Letsios2018}, it is likely that the deterministic approximation will remain favorable as it has significantly fewer variables and constraints ($6122$ vs.~$22514$ and $7332$ vs.~$14664$ respectively).
Assuming a box uncertainty set, we view it as a reasonable approximation for instances which cannot
be solved to optimality in a reasonable amount of time.
In the case of a more complex uncertainty
set, replacing $\tilde{d}_{j,k}$ with its maximum value may lead to
conservative solutions. General uncertainty sets require solving the robust problem.

Note that the large range of solution values in Fig.~\ref{fig:det-vs-rob} is not only due to the differing MIP gaps, but also the rolling horizon approach which does not guarantee optimality.

\begin{figure}[p]
    \centering
    \includegraphics[scale=.96]{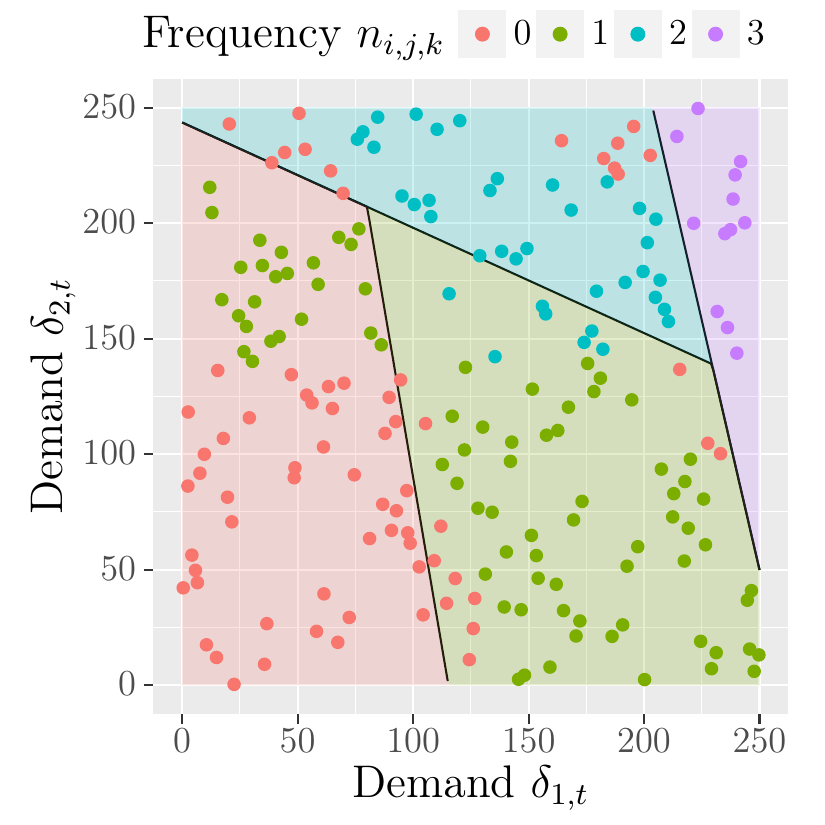}
    \caption{Toy instance: Frequency $n_{i,j,k}$ of Reaction 1 occuring in
    normal mode on unit Reactor within one scheduling horizon. Points are
    training data generated by solving the
scheduling model repeatedly and shaded areas are predictions by logistic
regression.}
    \label{fig:logreg-toy-R-R1N}
\end{figure}
\begin{figure}[p]
    \centering
    \includegraphics[width=0.96\textwidth]{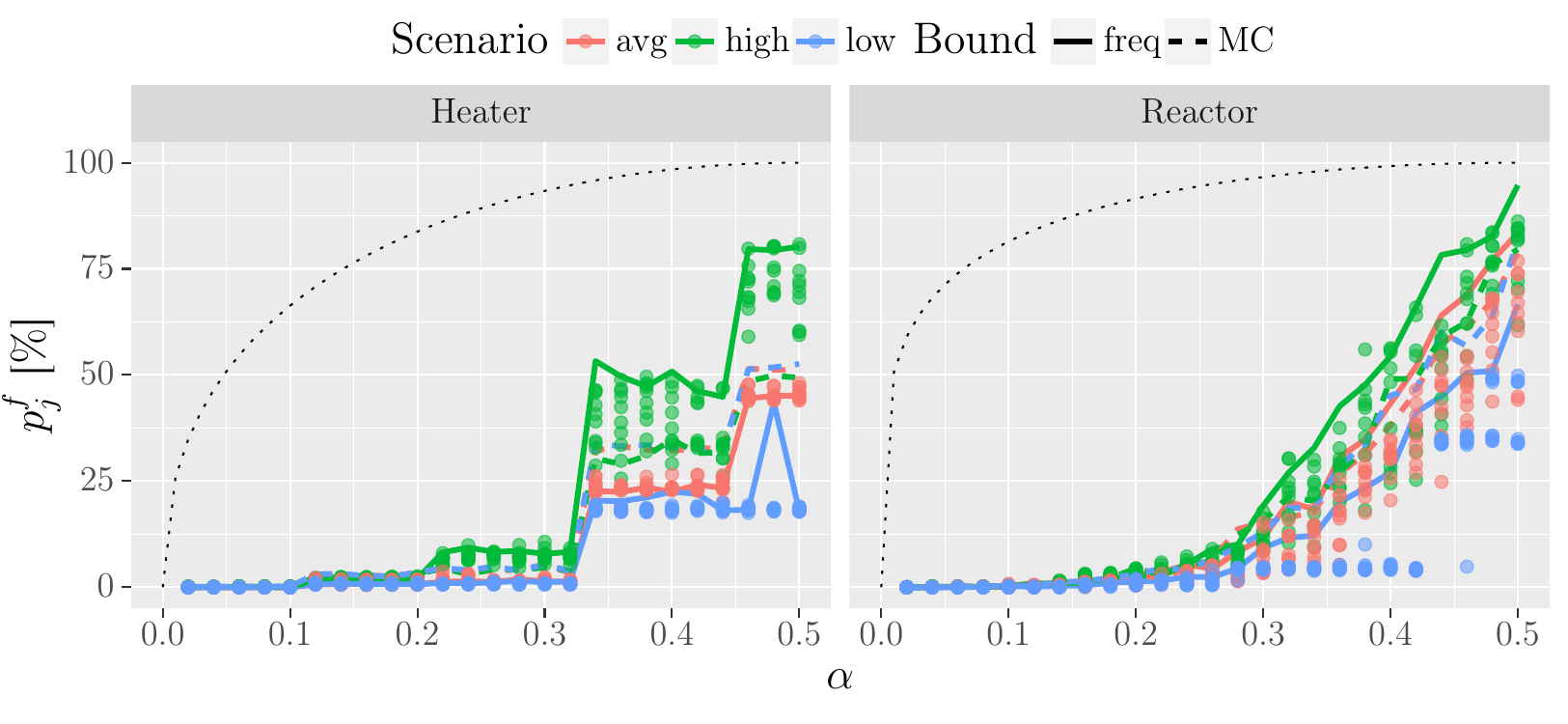}
    \caption {Toy instance: Frequency vs.~Markov chain approach.
    Points are rolling horizon solutions. Colored lines are bounds from the
    Frequency and Markov chain approach.
    The dotted black lines show a-priori bound B4 by \citet{Li2012}.}
    \label{fig:toy2-mc-vs-freq}
\end{figure}
Logistic regressions for the Frequency and Monte-Carlo approaches were trained based on $200$ scheduling horizon only solutions.
For the Frequency approach, the training points and their predicted values for Reaction $1$ in mode Normal of the toy instance are shown in Fig.~\ref{fig:logreg-toy-R-R1N}.
Logistic regression predicts $n_{i,j,k}$ reasonably well but the rigid, linear classifier cannot capture some of the details.
This is, however, not a major problem as the entire predicted probability distribution $\eta_{n_{i,j,k}}$ of operating mode occurence frequencies $n_{i,j,k}$ is used in estimating $\bar{p}^f_j$.
Near the predicted boundaries, $\eta_{n_{i,j,k}}$ of adjacent frequencies will be non-zero and they will be sampled in a significant number of operating mode sequences in Algorithm~\ref{algo:freq}.

Fig.~\ref{fig:toy2-mc-vs-freq} shows the probability of failure $p^f_j$ for each unit in the toy instance as a function of the uncertainty set parameter $\alpha$ for three different demand scenarios (average, high, and low).
Since the rolling horizon framework does not guarantee optimality, solving the problem repeatedly for the same value of $\alpha$ can lead to different solutions and failure probabilities.
The problem was therefore solved $10$ times for each value of $\alpha$.
It can be seen that the probability of failure generally increases with demand.
The figure furthermore shows the two bounds obtained using the Frequency and Markov chain based approaches, i.e.\ Algorithm \ref{algo:freq} versus \ref{algo:mc}.
For the reactor both approaches provide good upper bounds.
For the heater, the frequency based approach performs very well, while the Markov chain approach underestimates $p^f_j$ for the high demand scenario and overestimates it for the average and low demand scenarios.
Finally notice that the apriori bound\cite{Li2012} given by the dotted lines greatly overestimates $p^f_j$.
Fig.~\ref{fig:biondi-mc-vs-freq} shows similar trends for Instance~P1\cite{Kondili1993} (Kondili).
Both approaches provide reasonable bounds for all units and scenarios except the average demand scenario on the Heater, for which the frequency approach underestimates $p^f_j$.
Notice that $p^f_j$ is nearly zero for both the Heater and Reactor 2 at low demand irrespective of $\alpha$.
This is because for this scenario no maintenance occurs on either unit and $s_{j,t}$ does not get close to $s^{max}_{j}$.

\begin{figure}[htbp]
    \centering
    \includegraphics[width=\textwidth]{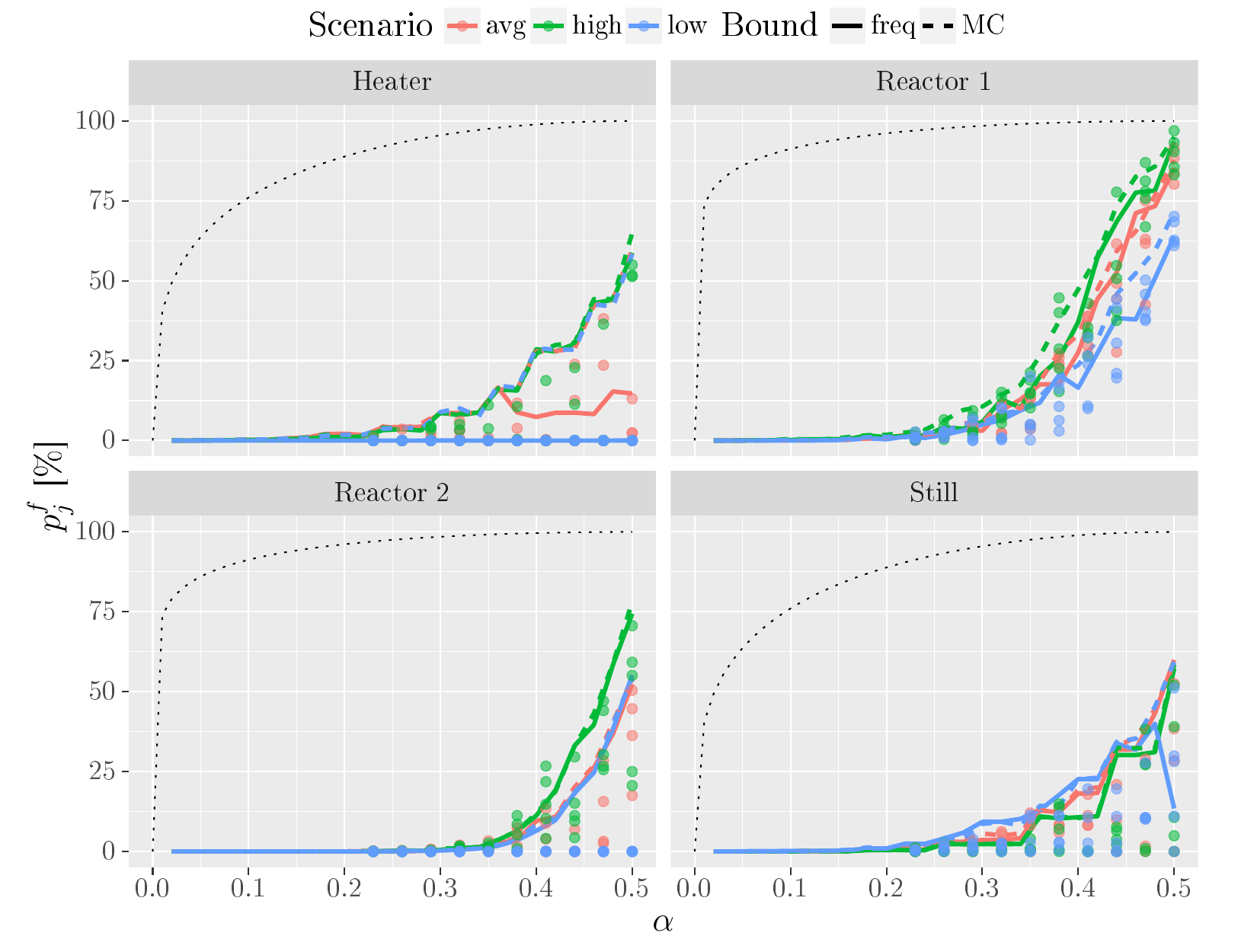}
    \caption{{Instance~P1\cite{Kondili1993}}:
        Frequency vs.~Markov chain approach. Points are rolling horizon solutions. Colored lines are bounds from the Frequency and Markov chain approach.
    The dotted black lines show a-priori bound B4 by \citet{Li2012}.} \label{fig:biondi-mc-vs-freq} \end{figure}

The performance of the Frequency and Markov chain based probability estimates was assessed using three metrics:
\begin{subequations}
\begin{align}
    \textrm{rms}^2_{all} &= \frac{1}{N\cdot|A|}\sum_{n \in \{1..N\}, \alpha
    \in A} \left(\left[p^f_j\right]_{n, \alpha} - \bar{p}^f_j\right)^2,\\
    p_{out} &= \frac{1}{N\cdot|A|}\sum_{n \in \{1..N\}, \alpha \in A}
    \mathbbm{1}\left(\left[p^f_j\right]_{n, \alpha} > \bar{p}^f_j\right)\label{eq:p-out},\textrm{~and}\\
    \textrm{rms}^2_{out}
    &= \frac{1}{p_{out} \cdot N \cdot |A|}
    \sum_{n \in \{1..N\}, \alpha \in A}
    \mathbbm{1}\left( \left[ p^f_j \right]_{n, \alpha} > \bar{p}^f_j \right)
    \left( \left[ p^f_j \right]_{n, \alpha} - \bar{p}^f_j \right)^2,
\end{align}
\end{subequations}
where $\textrm{rms}_{all}$ is the root-mean-squared deviation between the estimate and all rolling horizon solutions, $p_{out}$ is the percentage of rolling horizon solutions with a larger $p^f_j$ than the estimated bound, and $\textrm{rms}_{out}$ is the root-mean-squared deviation of all underestimated points.
While $\textrm{rms}_{all}$ evaluates the estimate $\bar{p}^f_j$'s quality as a predictor of $p^f_j$, $p_{out}$ and $\textrm{rms}_{out}$ assess its quality as an upper bound.  \begin{table}[ht]
    \centering
    \begin{tabular}{llrrrrr}
        \hline
        instance & bound & $\mathrm{rms}_{all}$ & $p_{out}$ & $\mathrm{rms}_{out}$ \\
        \hline
        toy      & freq  & 8.00        &    17.54  &     0.90    \\
        toy      & mc    & 10.41       &    9.62   &     2.86    \\
        P1\cite{Kondili1993}       & freq  & 12.61       &    18.08  &     5.80    \\
        P1       & mc    & 17.25       &    10.13  &     1.81    \\
        P2\cite{Karimi1997}       & freq  & 7.40        &    48.19  &     2.24    \\
        P2       & mc    & 13.68       &    40.56  &     1.10    \\
        P4\cite{Maravelias2003}       & freq  & 10.09       &    13.77  &     4.10    \\
        P4       & mc    & 11.40       &    11.91  &     2.67    \\
        P6\cite{Ierapetritou1998}       & freq  & 16.35       &    29.40  &     4.48    \\
        P6       & mc    & 20.16       &    21.27  &     3.23    \\
        \hline
        all      & freq  & 10.89       &    25.40  &     3.50    \\
        all      & mc    & 14.58       &    18.70  &     2.34    \\
        \hline
    \end{tabular}
    \caption{Average performance metrics for probability estimates - all
        instances} \label{tab:metrics-inst}
\end{table}

Table~\ref{tab:metrics-inst} shows values for all three instances averaged over demand scenarios and units for all tested STN instances.
It can be seen that the frequency approach is generally a better estimator for $p^f_j$ than the Markov chain approach (smaller values of $\textrm{rms}_{all}$) but also has a larger rate of misclassification $p_{out}$.
While $\textrm{rms}_{all}$ can be large due to noise in the rolling horizon solutions and $p_{out}$ values of up to $48\%$ show that $\bar{p}^f_j$ is not a perfect upper bound, $\textrm{rms}_{out}$ is generally small with average values of $3.50$ and $2.34\%$ for the Frequency and Markov chain approach respectively.
This means that, when $\bar{p}^f_j$ underestimates $p^f_j$, it does not do so by much.
Considering the noise introduced by non-optimal solutions and the rolling horizon framework, the error introduced by estimating $p^f_j$ through $\bar{p}^f_j$ is small.
Because it is a slightly better upper bound, the Markov chain approach was used for all subsequent experiments unless mentioned otherwise.

\begin{figure}[htbp]
    \centering
    \includegraphics[width=\textwidth]{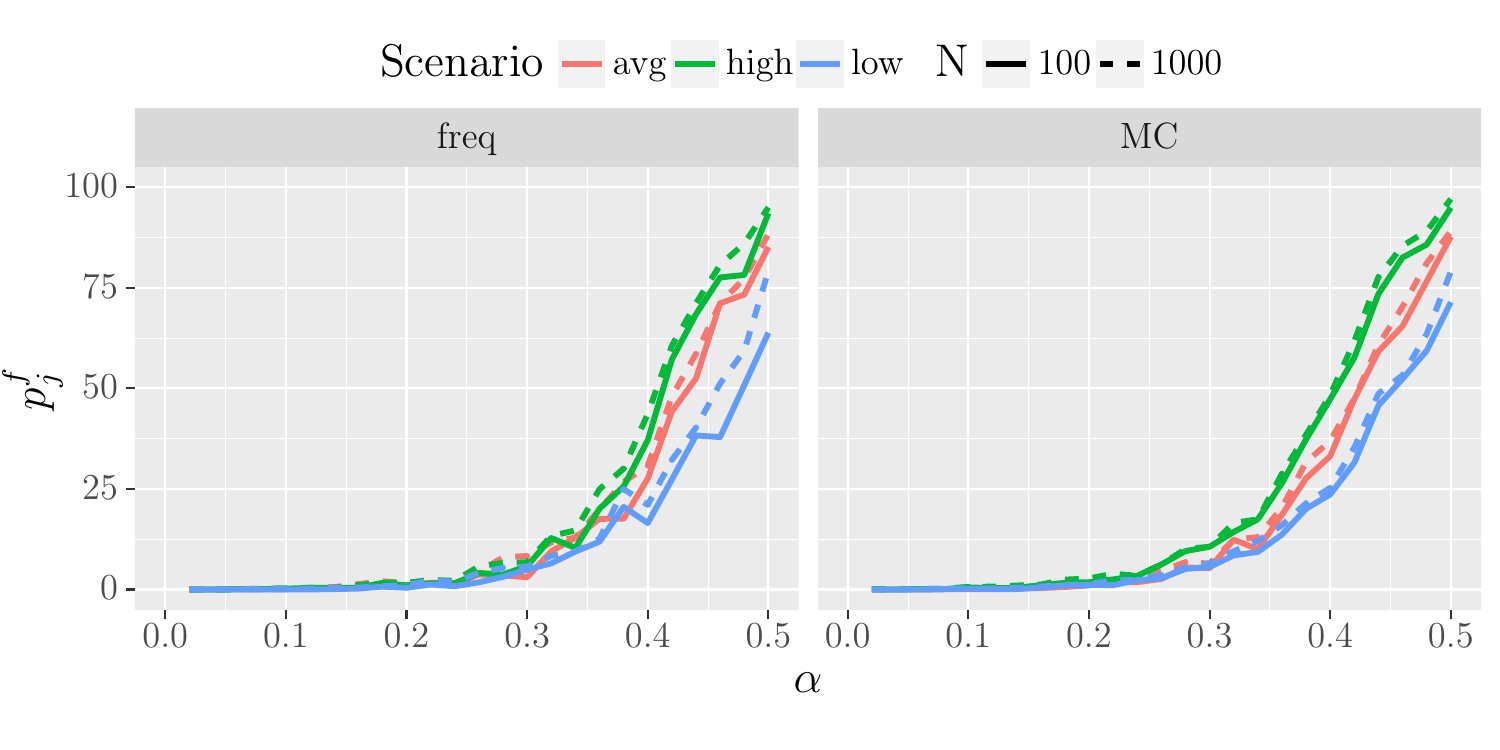}
    \caption{Effect of number of Monte-Carlo samples $N$ on failure probability
    $p^f_j$ of Reactor~1 (Instance~P1\cite{Kondili1993}).}
    \label{fig:100vs1000}
\end{figure}
Both probability estimation approaches are dependent on the number $N$ of operating mode sequences generated.
Fig.~\ref{fig:100vs1000} shows that increasing $N$ from $100$ to $1000$ for Reactor~1 in Instance~P1\cite{Kondili1993} only has a small effect on $p^f_j$, especially for the Markov chain based approach.
$N = 100$ is therefore deemed sufficient.

\begin{figure}[p]
    \centering
    \includegraphics{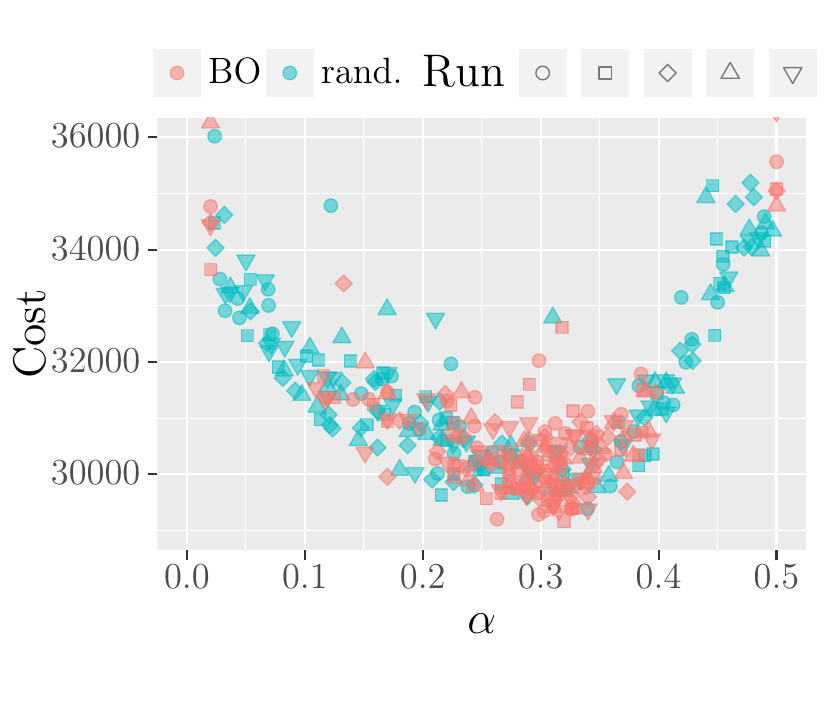}
    \caption{Bayesian optimization vs.~random search. Five runs of BO and
        random search were conducted. BO efficiently explores
        values near the optimal $\alpha$.}
    \label{fig:p-vs-alpha-bo-rand}
\end{figure}
\begin{figure}[p]
    \centering
    \includegraphics{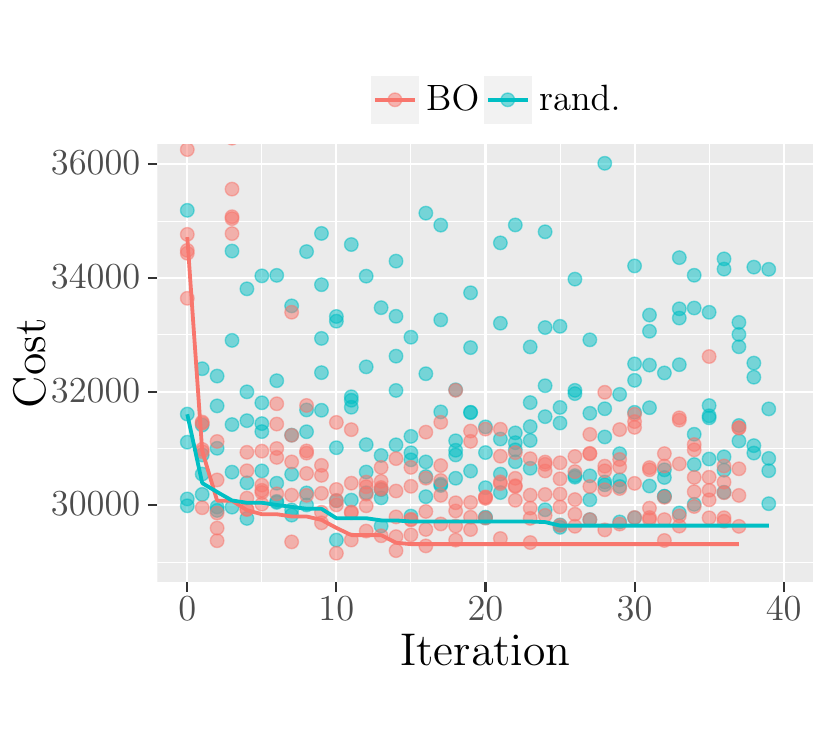}
    \caption{Bayesian optimization vs.~random search. Points are individual
        values obtained at a given iteration. Lines represent the best previously
    achieved solution value averaged over five runs. BO consistently finds
better solutions.}
    \label{fig:p-vs-iter-bo-rand}
\end{figure}
Figs.~\ref{fig:p-vs-alpha-bo-rand} and~\ref{fig:p-vs-iter-bo-rand} compare Bayesian optimization (BO) with random search for {Instance~P1\cite{Kondili1993}}.
Overall cost (Eqn.~\ref{eq:optalpha}) was evaluated over a horizon of 24 weeks and both Bayesian optimization and random search were repeated five times.
For the Bayesian optimization, four points were sampled evenly from the interval $\alpha \in [0.02, 0.5]$ initially.
Fig.~\ref{fig:p-vs-alpha-bo-rand} shows the obtained objective values as a function of $\alpha$.
There is clearly a trade-off between high cost of preventive maintenance in conservative solutions (small $\alpha$) and high cost of corrective maintenance for less conservative but also less robust solutions (large $\alpha$). Bayesian optimization very efficiently samples from the area around the optimal $\alpha \approx 0.3$ while random search naturally samples from the entire interval.
Fig~\ref{fig:p-vs-iter-bo-rand} shows the lowest objective value previously obtained as a function of the number of samples (averaged over five runs).
Bayesian optimization consistently finds lower cost solutions than random search and achieves a good compromise between preventive and corrective maintenance within about $20$ iterations.

\section{Conclusion}
\begin{figure}[htpb]
    \centering
    \includegraphics[width=\textwidth]{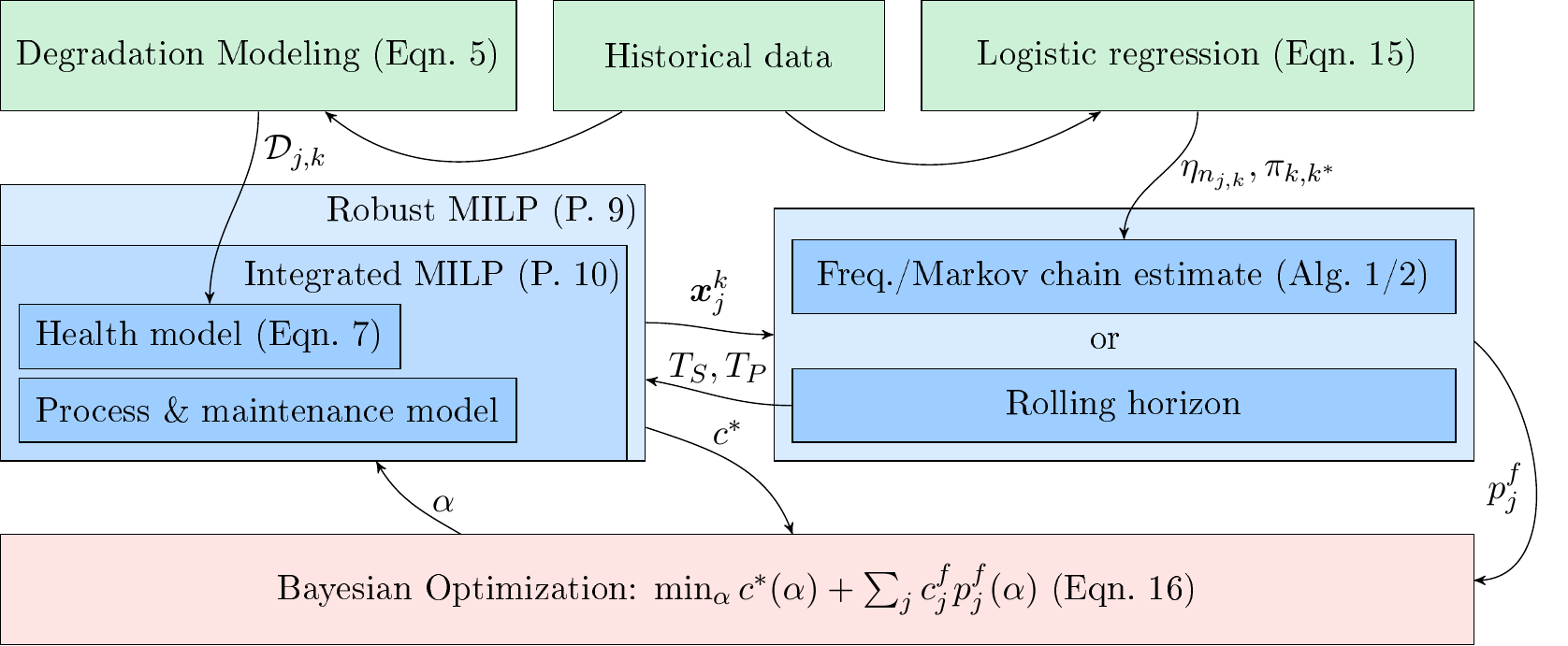}
    \caption{Flowchart of the proposed method.}
    \label{fig:flowchart-big}
\end{figure}
This work integrates equipment degradation effects in process level optimization problems.
The demonstrated methodology is summarized in Fig.~\ref{fig:flowchart-big}.
We combine commonly used methods from the Degradation Modeling literature, which allow unit health characteristics to be estimated and updated from data, with Robust Optimization.
This is highly relevant since, realistically, almost all equipment in chemical and manufacturing processes will be subject to performance degradation and failures.

Solving realistic integrated maintenance and production scheduling and planning problems is a hard task by itself, because models tend to be large and computationally expensive.
Combining such models with robust optimization increases the need for solving problems efficiently.
Furthermore, the scheduling task is highly repetitive and should become easier as historical data is collected.
To reduce computational expense, we show conditions where robust optimization problems can be solved by solving a deterministic approximation and develop data-based methods for estimating failure probabilities.

In the context of computationally expensive models, we show that Bayesian Optimization can be used effectively to optimize the uncertainty set in Robust optimization and balance the trade-off between preventive and corrective maintenance.

\section{Acknowledgement}
    This work was funded by the Engineering \& Physical Sciences Research Council (EPSRC) Center for Doctoral Training in High Performance Embedded and Distributed Systems (EP/L016796/1), an EPSRC/Schlumberger CASE studentship to J.W. (EP/R511961/1, voucher 17000145), and an EPSRC Research Fellowship to R.M. (EP/P016871/1).

\appendix
\section{Formulating the robust counterpart}
\label{sec:appa}
The reformulation of the semi-infinite constraints
\begin{subequations}
    \begin{align}
        & m_{j,t} s_{j}^{0} \leq s_{j,t}  & \forall t, j \in J, \tilde{d}_{j,k} \in \mathcal{D} \label{eq:cons1}\\
        & s_{j,t} \leq s_{j}^{max} + m_{j,t} \cdot (s_{j}^{0} - s_{j}^{max})
        & \forall t, j \in J, \tilde{d}_{j,k} \in \mathcal{U} \label{eq:cons2}\\
        & s_{j,t} \geq s_{j,t-\Delta t} + \sum_{k}{x_{j,k,t}\tilde{d}_{j,k}} + m_{j,t}
        \cdot (s_{j}^{0} - s_{j}^{max})  & \forall t, j \in J, \tilde{d}_{j,k}
        \in \mathcal{U} \label{eq:cons3}\\
        & s_{j,t} \leq s_{j,t-\Delta t} + \sum_{k}{x_{j,k,t}\tilde{d}_{j,k}}
        & \forall t, j \in J, \tilde{d}_{j,k} \in \mathcal{U} \label{eq:cons4}
    \end{align}
\end{subequations}
into a deterministic robust counterpart in this work is based on the approach by \citet{Lappas2016}.
$s_{j,t}$ is replaced by the affine decision rule
\begin{equation*}
    s_{j,t} = [s_{j,t}]_{0} + \sum_{k}{[s_{j,t}]_{k}\tilde{d}_{j,k}}
\end{equation*}
in all constraints, where $[s_{j,t}]_0$ and $[s_{j,t}]_k$ are coefficients which become new variables in the reformulated constraints.
The first constraint (Eqn.~\ref{eq:cons1}) can be reformulate in the following way:
\begin{equation*}
    \begin{aligned}
        & m_{j,t} s_{j}^{0} \leq [s_{j,t}]_{0} + \sum_{k}{[s_{j,t}]_{k}\tilde{d}_{j,k}}
        &                                                \\
        & \Rightarrow - \sum_{k}{[s_{j,t}]_{k}\tilde{d}_{j,k}} \leq [s_{j,t}]_{0} - m_{j,t} s_{j}^{0}    \\
        & \Rightarrow \Theta^* \leq [s_{j,t}]_{0} - m_{j,t} s_{j}^{0},
    \end{aligned}
\end{equation*}
where
\begin{equation*}
    \begin{aligned}
        & \Theta^{*} = \max_{\tilde{d}_{j,k}}
        && - \sum_{k}{[s_{j,t}]_{k}\tilde{d}_{j,k}}\\
        & \text{s.t}
        && -\tilde{d}_{j,k} \leq - \bar{d}_{j,k}(1-\epsilon)
        &   &   & \forall k \\
        &
        && \tilde{d}_{j,k} \leq \bar{d}_{j,k}(1+\epsilon)
        &   &   & \forall k.
    \end{aligned}
\end{equation*}
The dual of this is
\begin{equation*}
    \begin{aligned}
        & \Theta^{*} = \min_{u^{1,j,t}, l^{1,j,t}}
        && \sum_{k}{\bar{d}_{j,k}\left[(1+\epsilon)u^{1,j,t}_{k} -
        (1-\epsilon)l^{1,j,t}_{k}\right]}\\ & \text{s.t}
        && u^{1,j,t}_{k} - l^{1,j,t}_{k} \geq -[s_{j,t}]_{k}
        &   &   & \forall k,
    \end{aligned}
\end{equation*}
with dual variables $u^{1,j,t}_k$ and $l^{1,j,t}_k$.
Dropping the minimization leads to the final reformulation:
\begin{equation*}
    \begin{aligned}
        & \sum_{k}{\bar{d}_{j,k}\left[(1+\epsilon)u^{1,j,t}_{k} - (1-\epsilon)l^{1,j,t}_{k}\right]} \leq [s_{j,t}]_{0} - m_{j,t} s_{j}^{0}
        &
        & \forall t, j \in J \\
        &  u^{1,j,t}_{k} - l^{1,j,t}_{k} \geq -[s_{j,t}]_{k}
        &
        & \forall j,t,    k     \\
    \end{aligned}
\end{equation*}
Similar analysis for the second inequality (Eqn.~\ref{eq:cons2}) leads to reformulation
\begin{equation*}
    \begin{aligned}
        & \sum_{k}{\bar{d}_{j,k}\left[(1+\epsilon)u^{2,j,t}_{k} - (1-\epsilon)l^{2,j,t}_{k}\right]}\\
        & \leq s_{j}^{max} - [s_{j,t}]_{0} + m_{j,t}\cdot (s_{j}^{0} - s_{j}^{max})
        &
        & \forall t, j \in J \\
        &  u^{2,j,t}_{k} - l^{2,j,t}_{k} \geq [s_{j,t}]_{k}
        &
        & \forall j,t,k,
    \end{aligned}
\end{equation*}
the third inequality (Eqn.~\ref{eq:cons3}) yields
\begin{equation*}
    \begin{aligned}
        & \sum_{k}{\bar{d}_{j,k}\left[(1+\epsilon)u^{3,j,t}_{k} -
        (1-\epsilon)l^{3,j,t}_{k}\right]}\\
        & \leq [s_{j,t}]_{0} -
        [s_{j,t-\Delta t}]_{0} + m_{j,t} s_{j}^{max}
        &
        & \forall t, j \in J \\
        &  u^{3,j,t}_{k} - l^{3,j,t}_{k} \geq [s_{j,t-\Delta t}]_{k} - [s_{j,t}]_{k} + x_{j,k,t}
        &
        & \forall j,t,k,
    \end{aligned}
\end{equation*}
and the fourth (Eqn.~\ref{eq:cons4})
\begin{equation*}
    \begin{aligned}
        & \sum_{k}{\bar{d}_{j,k}\left[(1+\epsilon)u^{4,j,t}_{k} - (1-\epsilon)l^{4,j,t}_{k}\right]}\\
        & \leq - [s_{j,t}]_{0} + [s_{j,t-\Delta t}]_{0}
        &
        & \forall t, j \in J \\
        &  u^{4,j,t}_{k} - l^{4,j,t}_{k} \geq - [s_{j,t-\Delta t}]_{k} + [s_{j,t}]_{k} - x_{j,k,t}
        &
        & \forall j,t,k.
    \end{aligned}
\end{equation*}

\section{Equivalence to deterministic optimization with maximal parameters}
\label{sec:appb}

Note that for convenience and readability the index $j$ has been dropped in all
equations in this appendix.

Consider the case where the cost, process model, and maintenance model in
Problem~9 are not functions of the uncertain parameters $\tilde{d}_{k}$:
\begin{subequations}
    \label{eq:rob-app}
    \begin{align}
        &\min_{\bm{x}, \bm{m}}
        && \text{cost}(\bm{x}, \bm{m})
        &   &   &                                        \\
        & \text{s.t}
        && \text{process model}(\bm{x}, \bm{m})
        &   &   &                                        \\
        &
        && \text{maintenance model}(\bm{x}, \bm{m})
        &   &   &                                        \\
        &
        && m_t s_0 \leq [s_t]_0 + \sum{[s_t]_{k}\tilde{d}_k},
        &   &   & \forall t, \tilde{d}_k \in \mathcal{U} \label{eq:rob-c1}\\
        &
        && [s_t]_0 + \sum{[s_{t}]_k\tilde{d}_k} \leq s^{max} + m_t(s^0 - s^{max}),
        &   &   & \forall t, \tilde{d}_k \in \mathcal{U} \label{eq:rob-c2}\\
        &
        &
        \begin{split}
            [s_t]_0 + \sum{[s_{t}]_k\tilde{d}_k} \geq [s_{t-1}]_0 + \sum{[s_{t-1}]_k\tilde{d}_k} \\+ \sum_{k}x_{k,t}\tilde{d}_k \\ + m_t(s^0-s^{max}),
        \end{split}
        &   &   & \forall t, \tilde{d}_k \in \mathcal{U} \label{eq:rob-c3}\\
        &
        && [s_t]_0 + \sum{[s_{t}]_k\tilde{d}_k} [s_{t-1}]_0+ \sum{[s_{t-1}]_k\tilde{d}_k} + \sum_{k}x_{k,t-1}\tilde{d}_k,
        &   &   & \forall t, \tilde{d}_k \in \mathcal{U}\label{eq:rob-c4}
    \end{align}
\end{subequations}
Furthermore consider a deterministic version of Problem~\ref{eq:rob-app}
\begin{subequations}
    \label{eq:det-app}
    \begin{align}
        &\min_{\bm{x}, \bm{m}}
        && \text{cost}(\bm{x}, \bm{m})
        &   &   &           \\
        & \text{s.t}
        && \text{process model}(\bm{x}, \bm{m})
        &   &   &           \\
        &
        && \text{maintenance model}(\bm{x}, \bm{m})
        &   &   &           \\
        &
        && m_t s^0 \leq s_t,
        &   &   & \forall t \\
        &
        && s_t \leq s^{max} + m_t(s^0 - s^{max}),
        &   &   & \forall t \\
        &
        && s_t \geq s_{t-1} + \sum_{k}x_{k,t}d_k^{max} + m_t(s^0-s^{max}),
        &   &   & \forall t \\
        &
        && s_t \leq s_{t-1} + \sum_{k}x_{k,t}d_k^{max},
        &   &   & \forall t
    \end{align}
\end{subequations}
in which $\tilde{d}_k$ has been replaced by
\begin{equation*}
    d_k^{max} = \max_{\tilde{d}_k \in \mathcal{U}}\tilde{d}_k.
\end{equation*}

\begin{theorem}
    Given that cost, process model, and maintenance model are not functions of
    $\tilde{d}_{j,k}$ and that $s^0_j \leq s^{init}_j = s_{j,t=t_0} \leq
    s^{max}_j$ and $\tilde{d}_{j,k} \geq 0, \forall \tilde{d}_{j,k} \in
    \mathcal{U}$, then a feasible solution $(\bm{x} = [x_{k,t},\ldots],
    \bm{m}=[m_t,\ldots], \bm{h} = [s_t])$ to Problem~\ref{eq:det-app} forms a
    feasible solution $(\bm{x} = [x_{k,t},\ldots], \bm{m}=[m_t,\ldots], \bm{h}
    = [[s_t]_0, [s_t]_k])$ to Problem~\ref{eq:rob-app} with
\begin{subequations}
    \begin{align}
        [s_t]_0 &=
        \begin{cases}
            s^{init} & t < t_{m,0}    \\
            s^{0}    & t \geq t_{m,0}
        \end{cases} \label{eq:A0t}\\
        [s_{t}]_k &= \sum_{t'=t_{m,t}}^t{x_{k,t}}, \label{eq:Akt}
    \end{align}
\end{subequations}
where $s^{init} = s(t=0)$, $t_{m,0}$ is the first point in time at which maintenance is performed, and $t_{m,t}$ is the most recent point in time at which maintenance was performed.
\end{theorem}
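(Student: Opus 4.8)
The plan is to substitute the proposed affine decision rule $s_t(\tilde{d}) = [s_t]_0 + \sum_k [s_t]_k \tilde{d}_k$, with the coefficients given in Eqns.~\ref{eq:A0t}--\ref{eq:Akt}, directly into each of the four semi-infinite constraints~\ref{eq:rob-c1}--\ref{eq:rob-c4} and to verify that each holds for every $\tilde{d}_k \in \mathcal{U}$. Since every constraint is affine in $\tilde{d}$, it suffices to check it at the appropriate worst-case vertex of the box $\mathcal{U}$. The crucial structural observation is that the coefficients $[s_t]_k = \sum_{t'=t_{m,t}}^t x_{k,t'}$ are sums of binary variables and hence non-negative; combined with the hypothesis $\tilde{d}_k \geq 0$, this makes $s_t(\tilde{d})$ non-decreasing in each $\tilde{d}_k$. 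Consequently the maximum of any accumulation expression over $\mathcal{U}$ is attained at $\tilde{d}_k = d_k^{max}$ and the minimum at $\tilde{d}_k = d_k^{min} = \bar{d}_k(1-\epsilon)$.

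First I would prove the key identity that evaluating the rule at the worst-case realization reproduces the deterministic trajectory, i.e.\ $[s_t]_0 + \sum_k [s_t]_k d_k^{max} = s_t$, where $s_t$ is the deterministic solution value. This follows by induction on $t$: within a maintenance-free interval the deterministic recursion forces $s_t = s_{t-1} + \sum_k x_{k,t} d_k^{max}$, which matches the increment $[s_t]_k - [s_{t-1}]_k = x_{k,t}$ of the affine coefficients while $[s_t]_0$ stays constant; at a maintenance period $\tau = t_{m,t}$ both the deterministic value and $[s_\tau]_0$ collapse to $s^0$, using that no degrading task runs while a unit is under maintenance so that $x_{k,\tau} = 0$. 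Before the first maintenance $[s_t]_0 = s^{init}$ and the same telescoping argument applies starting from the initial condition $s_{t_0} = s^{init}$.

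With monotonicity and the identity in hand, the four robust constraints reduce to their deterministic counterparts. For the lower bound~\ref{eq:rob-c1} the worst case is $\tilde{d}_k = d_k^{min}$, and since $[s_t]_0 \in \{s^{init}, s^0\}$ with $s^{init} \geq s^0$ and the remaining sum is non-negative, one gets $s_t(\tilde{d}) \geq s^0 \geq m_t s^0$ immediately. For the upper bound~\ref{eq:rob-c2} the worst case is $\tilde{d}_k = d_k^{max}$; by the identity the left-hand side equals $s_t$, so the robust constraint becomes exactly the deterministic $s_t \leq s^{max} + m_t(s^0 - s^{max})$. The two recursive constraints~\ref{eq:rob-c3} and~\ref{eq:rob-c4} are affine in $\tilde{d}$ with $\tilde{d}_k$-coefficient $[s_t]_k - [s_{t-1}]_k - x_{k,t}$; this vanishes inside a maintenance-free interval, so both hold trivially, and it equals $-[s_{t-1}]_k \leq 0$ at a maintenance period, where evaluating at $d_k^{max}$ and invoking the identity reduces~\ref{eq:rob-c3} to the feasible inequality $s_{t-1} \leq s^{max}$ and makes~\ref{eq:rob-c4} hold because its left-hand side is then the non-positive quantity $-\sum_k [s_{t-1}]_k \tilde{d}_k$.

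I expect the main obstacle to be the bookkeeping at the boundaries --- the initial time, the first maintenance action, and every subsequent reset --- where the piecewise definition of $[s_t]_0$ and the summation limits of $[s_t]_k$ must be aligned so that the identity $[s_t]_0 + \sum_k [s_t]_k d_k^{max} = s_t$ holds with the correct off-by-one treatment of the maintenance period. Getting this alignment right hinges on the assumption $s^0 \leq s^{init} \leq s^{max}$, which is needed so that the pre-maintenance segment respects both the floor and the ceiling, and on the fact that $x_{k,\tau} = 0$ whenever $m_\tau = 1$. Once these boundary cases are pinned down, the remaining verifications are routine vertex evaluations of affine inequalities.
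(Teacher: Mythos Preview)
Your proposal is correct and follows essentially the same route as the paper's proof: both rely on the non-negativity of $[s_t]_k$, the case split $m_t \in \{0,1\}$ for the recursive constraints~\ref{eq:rob-c3}--\ref{eq:rob-c4}, and the decomposition identity $s_t = [s_t]_0 + \sum_k [s_t]_k d_k^{max}$ to reduce the upper bound~\ref{eq:rob-c2} to its deterministic counterpart. The only cosmetic difference is ordering --- you establish the identity first by induction and invoke it throughout, whereas the paper verifies~\ref{eq:rob-c3}--\ref{eq:rob-c4} conditionally on~\ref{eq:rob-c2} holding at $t-1$ and only at the end appeals to the decomposition to close the loop; one small point to tighten is that in your treatment of~\ref{eq:rob-c4} at a maintenance period the left-hand side also carries the constant term $s^0 - [s_{t-1}]_0 \leq 0$, not just $-\sum_k [s_{t-1}]_k \tilde{d}_k$.
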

\begin{proof}
    First we show that the Inequality~\ref{eq:rob-c1}
    \begin{equation}
        \tag{\ref{eq:rob-c1}}
        m_t s^0 \leq [s_t]_0 + \sum{[s_{t}]_k\tilde{d}_k}
    \end{equation}
    holds for any $\tilde{d}_k \geq 0$ given $(\bm{x} = [x_{k,t},\ldots], \bm{m}=[m_t,\ldots], \bm{h} = [[s_t]_0, [s_t]_k])$: From the assumption $s^0 \leq s^{init} \leq s^{max}$ and Eqn.~\ref{eq:A0t} it follows that $s^0 \leq [s_t]_0 \leq s^{max}$. Furthermore, it directly follows from Eqn.~\ref{eq:Akt} that $[s_{t}]_k \geq 0$. Eqn.~\ref{eq:rob-c1} is therefore guaranteed to hold for any $\tilde{d}_k \in \mathcal{U}, \tilde{d}_k \geq 0$.

    Next, we show that Inequalities~\ref{eq:rob-c3} and~\ref{eq:rob-c4},
    \begin{equation}
        \tag{\ref{eq:rob-c3}}
        [s_t]_0 + \sum{[s_{t}]_k\tilde{d}_k} \geq [s_{t-1}]_0 + \sum{[s_{t-1}]_k\tilde{d}_k} + \sum_{k}x_{k,t}\tilde{d}_k + m_t(s^0-s^{max})
    \end{equation}
    and
    \begin{equation}
        \tag{\ref{eq:rob-c4}}
        [s_t]_0 + \sum{[s_{t}]_k\tilde{d}_k} \leq [s_{t-1}]_0 + \sum{[s_{t-1}]_k\tilde{d}_k} + \sum_{k}x_{k,t}\tilde{d}_k
    \end{equation}
    respectively, hold for any $\tilde{d}_k \in \mathcal{U}$ as long as Inequality~\ref{eq:rob-c2} is satisfied.  We first assume that $m_t = 0$. In this case $[s_{t}]_k = [s_{t-1}]_k + x_{k,t}$ (from Eqn.~\ref{eq:Akt}), $[s_t]_0 = [s_{t-1}]_0$ (from Eqn.~\ref{eq:A0t}) and Eqns.~\ref{eq:rob-c3} and~\ref{eq:rob-c4} simplify to
    \begin{equation*}
        [s_t]_0 \geq [s_{t-1}]_0
    \end{equation*}
    and
    \begin{equation*}
        [s_t]_0 \leq [s_{t-1}]_0,
    \end{equation*}
    which is true for any $\tilde{d}_k$. Next we assume that $m_t = 1$. In this case $[s_t]_0 = s^0$ (from Eqn.~\ref{eq:A0t}), $[s_t]_k = 0$ (from Eqn.~\ref{eq:Akt}), and $x_{k,t} = 0$ (assuming that a unit can not be operated while maintenance is performed). Substituting this into Eqns.~\ref{eq:rob-c3} and~\ref{eq:rob-c4} and rearranging yields
    \begin{equation}
        \label{eq:ineq3-s}
        [s_{t-1}]_0 + \sum{[s_{t-1}]_k\tilde{d}_k} \leq s^{max}
    \end{equation}
    and
    \begin{equation}
        \label{eq:ineq4-s}
        s^0 \leq [s_{t-1}]_0 + \sum{[s_{t-1}]_k\tilde{d}_k}
    \end{equation}
    respectively. Eqn.~\ref{eq:ineq3-s} is guaranteed to be satisfied as long as Inequality~\ref{eq:rob-c2} holds for $t=t-1$ and Eqn.~\ref{eq:ineq4-s} holds for any $\tilde{d}_k \in \mathcal{U}, \tilde{d}_k \geq 0$ since $[s_{t-1}]_0 \geq S_0$ and $[s_{t-1}]_k \geq 0$.

    Finally we show that the Inequality~\ref{eq:rob-c2}
    \begin{equation}
        \tag{\ref{eq:rob-c2}}
        [s_t]_0 + \sum{[s_{t}]_k\tilde{d}_k} \leq s^{max} + m_t(s^0 - s^{max})
    \end{equation}
    holds for any $\tilde{d}_k \in \mathcal{U}$. To this end we notice that
    \begin{equation*}
        \argmax_{\tilde{d}_k \in \mathcal{U}}{[s_t]_0 + \sum_k{[s_{t}]_k\tilde{d}_k} = d_k^{max}}
    \end{equation*}
    since $[s_{t}]_k \geq 0$.
    Therefore, if Eqn.~\ref{eq:rob-c2} holds for $\tilde{d}_k = d_k^{max}$, it holds for any $\tilde{d}_k \in \mathcal{U}$.
    Since $(\bm{x} = [x_{k,t},\ldots], \bm{m}=[m_t,\ldots], \bm{h} = [s_t])$ is a solution to Problem~\ref{eq:det-app},
    \begin{equation}
        s_t \leq s^{max} + m_t(s^0 - s^{max}).
    \end{equation}
    Lastly, noticing that the definition of $[s_t]_0$ and $[s_t]_k$ (Eqns.~\ref{eq:A0t} and~\ref{eq:Akt}) ensure that $s_t$ can always be decomposed as
    \begin{equation*}
        s_t = [s_t]_0 + \sum_k{[s_{t}]_kd_k^{max}}
    \end{equation*}
    if $s_t$ satisfies Problem~\ref{eq:det-app}, it follows that
    \begin{equation*}
        s_t = [s_t]_0 + \sum_k{[s_{t}]_kd_k^{max}} \leq s^{max} + m_t(s^0 - s^h{max})
    \end{equation*}
    and Eqn.~\ref{eq:rob-c2} holds for all $\tilde{d}_k \in \mathcal{U}$.
\end{proof}

\section{Instances of the STN}
\label{sec:appc}
\begin{table}[H]
    {\tiny
        \begin{tabularx}{\textwidth}{r l l l l l l l l l} \hline \hline
            States        & Feed A   & Feed B   & Feed C   & Hot A & Int. BC & Int. AB & Impure E & Prod. 1  & Prod. 2  \\ \hline
            Capacity [kg] & $\infty$ & $\infty$ & $\infty$ & 100   & 200     & 150     & 100      & $\infty$ & $\infty$ \\
            Initial [kg]  & $\infty$ & $\infty$ & $\infty$ & 0     & 0       & 0       & 0        & 0        & 0        \\
            Storage cost  & 0        & 0        & 0        & 1     & 1       & 1       & 1        & 5        & 5        \\ \hline \hline
        \end{tabularx}
        \begin{tabularx}{\textwidth}{l l l l l}
            Units            & Heater & Reactor 1 & Reactor 2 & Still \\ \hline
            $v^{min}_j$ [kg] & 40     & 32        & 20        & 80    \\
            $v^{max}_j$ [kg] & 100    & 80        & 50        & 200   \\
            $s^{max}_j$      & 80     & 150       & 160       & 100   \\
            $s^{init}_j$     & 30     & 50        & 120       & 40    \\
            $\tau_j$ [hr]    & 15     & 21        & 24        & 15    \\
            $c^{maint}_j$    & 300    & 900       & 2000      & 1200  \\
            $c^f_j$          & 2000   & 3000      & 3000      & 1500  \\ \hline
        \end{tabularx}\\
        \begin{tabularx}{\textwidth}{l l l l l l l l l l}
            \hline
            Task & Mode & \multicolumn{8}{c}{Unit}\\
            \cline{3-10}
            & & \multicolumn{2}{l}{Heater} & \multicolumn{2}{l}{Reactor 1} & \multicolumn{2}{l}{Reactor 2} & \multicolumn{2}{l}{Still}\\
            \cline{3-10}
                    &        & $p_{i,j,k}$ & $\bar{d}_{i,j,k}/\sigma_{i,j,k}$ & $p$ & $\bar{d}/\sigma$ & $p$ & $\bar{d}/\sigma$ & $p$ & $\bar{d}/\sigma$ \\
            \hline
            Heating        & Slow   & 9           & 1/0.27                       &             &                              &             &                              &             &                              \\
                    & Normal & 6           & 2/0.54                       &             &                              &             &                              &             &                              \\
                        & Fast   & 3           & 3/0.81                       &             &                              &             &                              &             &                              \\
            Reaction 1    & Slow   &             &                              & 27          & 4/1.08                       & 30          & 4/1.08                       &             &                              \\
                    & Normal &             &                              & 15          & 5/1.35                       & 18          & 5/1.35                       &             &                              \\
                        & Fast   &             &                              & 8           & 8/2.43                       & 12          & 10/2.7                       &             &                              \\
            Reaction 2    & Slow   &             &                              & 36          & 1/0.27                       & 33          & 2/0.54                       &             &                              \\
                        & Normal &             &                              & 21          & 3/0.81                       & 18          & 4/1.08                       &             &                              \\
                        & Fast   &             &                              & 15          & 5/1.35                       & 12          & 4/1.08                       &             &                              \\
            Reaction 3    & Slow   &             &                              & 30          & 3/0.81                       & 24          & 2/0.54                       &             &                              \\
                        & Normal &             &                              & 18          & 7/1.89                       & 21          & 5/1.35                       &             &                              \\
                        & Fast   &             &                              & 6           & 8/2.16                       & 12          & 9/2.43                       &             &                              \\
            Separation    & Slow   &             &                              &             &                              &             &                              & 15          & 2/0.54                       \\
                        & Normal &             &                              &             &                              &             &                              & 9           & 5/1.35                       \\
                        & Fast   &             &                              &             &                              &             &                              & 6           & 6/1.62                       \\
            \hline
        \end{tabularx}
        \begin{tabularx}{\textwidth}{l l l l l l l}
            \hline
            Period & \multicolumn{6}{c}{Scenario}\\ \cline{2-7}
            & \multicolumn{2}{c}{Low} & \multicolumn{2}{c}{Average} & \multicolumn{2}{c}{High}\\ \cline{2-7}
                & Product 1 & Product 2 & Product 1 & Product 2 & Product 1 & Product 2 \\ \hline
            1    & 76        & 136       & 150       & 200       & 190       & 294       \\
            2    & 116       & 162       & 88        & 150       & 231       & 323       \\
            3    & 101       & 115       & 125       & 197       & 198       & 307       \\
            4    & 91        & 141       & 67        & 296       & 217       & 335       \\
            5    & 60        & 147       & 166       & 191       & 181       & 293       \\
            6    & 60        & 103       & 203       & 193       & 244       & 328       \\
            7    & 54        & 148       & 90        & 214       & 243       & 326       \\
            8    & 110       & 113       & 224       & 294       & 182       & 296       \\
            9    & 92        & 105       & 174       & 247       & 246       & 296       \\
            10    & 99        & 175       & 126       & 313       & 189       & 348       \\
            11    & 51        & 177       & 66        & 226       & 199       & 319       \\
            12    & 117       & 164       & 119       & 121       & 222       & 346       \\
            13    & 108       & 124       & 234       & 197       & 246       & 331       \\
            14    & 64        & 107       & 64        & 242       & 239       & 336       \\
            15    & 62        & 154       & 103       & 220       & 180       & 302       \\
            16    & 62        & 135       & 77        & 342       & 216       & 306       \\
            17    & 71        & 109       & 132       & 355       & 218       & 302       \\
            18    & 86        & 139       & 186       & 320       & 213       & 349       \\
            19    & 80        & 102       & 174       & 335       & 233       & 284       \\
            20    & 70        & 172       & 239       & 298       & 191       & 347       \\
            21    & 92        & 120       & 124       & 252       & 233       & 303       \\
            22    & 59        & 153       & 194       & 222       & 181       & 303       \\
            23    & 70        & 124       & 91        & 324       & 189       & 307       \\
            24    & 75        & 141       & 228       & 337       & 188       & 299       \\
            \hline \hline
        \end{tabularx}\\
        \caption{Instance~P1\cite{Kondili1993}}
        \label{tab:app-p1}
    }
    \end{table}
    \begin{table}[H]
    \centering
    \includegraphics{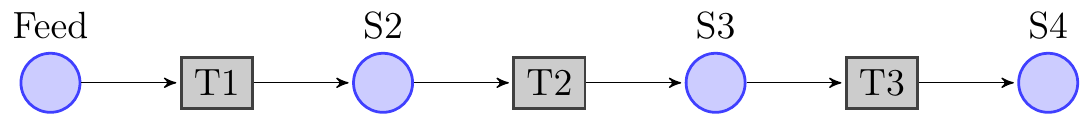}\\
    \vspace{1em}
    {\footnotesize
        \begin{tabularx}{\textwidth}{r l l l l} \hline \hline
            States        & S1       & S2       & S3       & S4       \\ \hline
            Capacity [kg] & $\infty$ & $\infty$ & $\infty$ & $\infty$ \\
            Initial [kg]  & $\infty$ & 0        & 0        & 0        \\
            Storage cost  & 0        & 1        & 1        & 1        \\ \hline
        \end{tabularx}
        \begin{tabularx}{\textwidth}{l l l l l l}
            \hline
            Units            & U1  & U2  & U3  & U4  & U5  \\ \hline
            $v^{min}_j$ [kg] & 0   & 0   & 0   & 0   & 0   \\
            $v^{max}_j$ [kg] & 100 & 150 & 200 & 150 & 150 \\
            $s^{max}_j$      & 120 & 100 & 150 & 90  & 80  \\
            $s^{init}_j$     & 10  & 70  & 45  & 60  & 30  \\
            $t_j$ [hr]       & 21  & 15  & 18  & 9   & 13  \\
            $c^{maint}_j$    & 600 & 600 & 500 & 400 & 400 \\ \hline
        \end{tabularx}\\
        \begin{tabularx}{\textwidth}{l l l l l l l l l l l l}
            \hline
            Task & Mode & \multicolumn{10}{c}{Unit}\\
            \cline{3-12}
            & & \multicolumn{2}{c}{U1} & \multicolumn{2}{c}{U2} & \multicolumn{2}{c}{U3} & \multicolumn{2}{c}{U4} & \multicolumn{2}{c}{U5}\\
            \cline{3-12}
               &        & $p_{i,j,k}$ & $\bar{d}_{i,j,k}/\sigma_{i,j,k}$ & $p$ & $\bar{d}/\sigma$ & $p$ & $\bar{d}/\sigma$ & $p$ & $\bar{d}/\sigma$ & $p$ & $\bar{d}/\sigma$ \\
            \hline
            T1 & Slow   & 33          & 3/0.66                       & 30  & 3/0.66       &     &              &     &              &     &              \\
            T1 & Normal & 25          & 5/1.35                       & 21  & 6/1.62       &     &              &     &              &     &              \\
            T1 & Fast   & 15          & 7/2.17                       & 12  & 9/2.79       &     &              &     &              &     &              \\
            T2 & Slow   &             &                              &     &              & 24  & 4/0.88       &     &              &     &              \\
            T2 & Normal &             &                              &     &              & 18  & 6/1.62       &     &              &     &              \\
            T2 & Fast   &             &                              &     &              & 12  & 10/3.1       &     &              &     &              \\
            T3 & Slow   &             &                              &     &              &     &              & 21  & 2/0.44       & 18  & 2/0.44       \\
            T3 & Normal &             &                              &     &              &     &              & 15  & 4/1.08       & 12  & 4/1.08       \\
            T3 & Fast   &             &                              &     &              &     &              & 9   & 7/2.17       & 6   & 6/1.86       \\
            \hline
        \end{tabularx}
        \begin{tabularx}{\textwidth}{l l l l l l l l}
            \hline
            Period & \multicolumn{3}{c}{Scenario} & Period & \multicolumn{3}{c}{Scenario}\\ \cline{2-4} \cline{6-8}
               & Low & Average & High &    & Low & Average & High \\ \hline
            1  & 725 & 1167    & 2002 & 13 & 446 & 947     & 1847 \\
            2  & 587 & 1110    & 2141 & 14 & 201 & 1426    & 2178 \\
            3  & 397 & 1087    & 1668 & 15 & 305 & 1090    & 2159 \\
            4  & 558 & 906     & 1977 & 16 & 447 & 1040    & 2015 \\
            5  & 411 & 1188    & 1692 & 17 & 378 & 917     & 1662 \\
            6  & 678 & 1191    & 1805 & 18 & 566 & 1190    & 1782 \\
            7  & 252 & 1436    & 2007 & 19 & 409 & 953     & 1646 \\
            8  & 415 & 1020    & 2174 & 20 & 797 & 1109    & 2135 \\
            9  & 539 & 1110    & 1713 & 21 & 605 & 1298    & 2113 \\
            10 & 414 & 1266    & 2162 & 22 & 413 & 1275    & 1760 \\
            11 & 214 & 1042    & 2155 & 23 & 550 & 1364    & 1958 \\
            12 & 612 & 1169    & 2018 & 24 & 362 & 1158    & 1779 \\
            \hline \hline
        \end{tabularx}\\}
        \caption{Instance~P2\cite{Karimi1997}}
    \end{table}
    \begin{table}[H]
        \centering
        \includegraphics{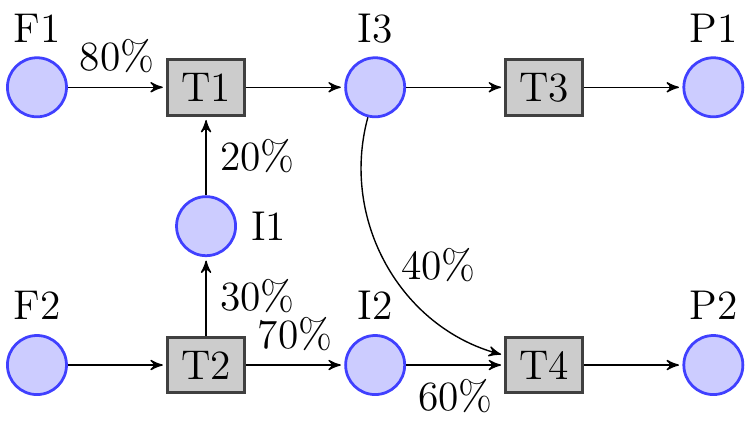}\\
        \vspace{1em}
        {\scriptsize
            \begin{tabularx}{\textwidth}{r l l l l l l l} \hline \hline
                States        & F1       & F2       & I1  & I2  & I3  & P1   & P2   \\ \hline
                Capacity [kg] & $\infty$ & $\infty$ & 200 & 100 & 500 & 1000 & 1000 \\
                Initial [kg]  & $\infty$ & $\infty$ & 0   & 0   & 0   & 0    & 0    \\
                Storage cost  & 0        & 0        & 1   & 1   & 1   & 5    & 5    \\ \hline
            \end{tabularx}
            \begin{tabularx}{\textwidth}{l l l l}
                \hline
                Units            & R1  & R2  & R3  \\ \hline
                $v^{min}_j$ [kg] & 40  & 25  & 40  \\
                $v^{max}_j$ [kg] & 80  & 50  & 80  \\
                $s^{max}_j$      & 70  & 120 & 70  \\
                $s^{init}_j$     & 10  & 10  & 10  \\
                $\tau_j$ [hr]    & 21  & 21  & 21  \\
                $c^{maint}_j$    & 600 & 600 & 600 \\ \hline
            \end{tabularx}\\
            \begin{tabularx}{\textwidth}{l l l l l l l l}
                \hline
                Task & Mode & \multicolumn{6}{c}{Unit}\\
                \cline{3-8}
                & & \multicolumn{2}{l}{R1} & \multicolumn{2}{l}{R2} & \multicolumn{2}{l}{R3}\\
                \cline{3-8}
                   &        & $p_{i,j,k}$ & $\bar{d}_{i,j,k}/\sigma_{i,j,k}$ & $p$ & $\bar{d}/\sigma$ & $p$ & $\bar{d}/\sigma$ \\
                \hline
                T1 & Slow   & 24          & 3/0.66                           & 24  & 3/0.66           &     &                  \\
                T1 & Normal & 15          & 5/1.35                           & 15  & 5/1.35           &     &                  \\
                T1 & Fast   & 9           & 8/2.16                           & 9   & 8/2.16           &     &                  \\
                T2 & Slow   & 36          & 3/0.66                           & 36  & 3/0.66           &     &                  \\
                T2 & Normal & 24          & 5/1.35                           & 24  & 5/1.35           &     &                  \\
                T2 & Fast   & 15          & 8/2.16                           & 15  & 8/2.16           &     &                  \\
                T3 & Slow   &             &                                  &     &                  & 12  & 3/0.66           \\
                T3 & Normal &             &                                  &     &                  & 9   & 5/1.35           \\
                T3 & Fast   &             &                                  &     &                  & 6   & 8/2.16           \\
                T4 & Slow   &             &                                  &     &                  & 24  & 3/0.66           \\
                T4 & Normal &             &                                  &     &                  & 15  & 5/1.35           \\
                T4 & Fast   &             &                                  &     &                  & 9   & 8/2.16           \\
                \hline
            \end{tabularx}
            \begin{tabularx}{\textwidth}{l l l l l l l l l l l l l l}
                \hline
                Period & \multicolumn{6}{c}{Scenario} & Period & \multicolumn{6}{c}{Scenario}\\ \cline{2-7} \cline{9-14}
                & \multicolumn{2}{c}{Low} & \multicolumn{2}{c}{Average} & \multicolumn{2}{c}{High} & & \multicolumn{2}{c}{Low} & \multicolumn{2}{c}{Average} & \multicolumn{2}{c}{High}\\ \cline{2-7} \cline{9-14}
                   & P1  & P2  & P1  & P2  & P1  & P2  &    & P1  & P2  & P1  & P2  & P1  & P2  \\ \hline
                1  & 190 & 102 & 271 & 231 & 311 & 305 & 13 & 197 & 192 & 213 & 280 & 314 & 376 \\
                2  & 172 & 156 & 230 & 282 & 381 & 347 & 14 & 120 & 100 & 257 & 270 & 370 & 392 \\
                3  & 102 & 103 & 274 & 226 & 381 & 321 & 15 & 180 & 115 & 286 & 223 & 335 & 370 \\
                4  & 130 & 172 & 289 & 281 & 310 & 310 & 16 & 178 & 186 & 201 & 281 & 386 & 358 \\
                5  & 130 & 104 & 270 & 212 & 317 & 371 & 17 & 135 & 138 & 288 & 289 & 398 & 309 \\
                6  & 174 & 192 & 205 & 205 & 339 & 328 & 18 & 140 & 115 & 284 & 253 & 304 & 396 \\
                7  & 167 & 194 & 260 & 248 & 379 & 348 & 19 & 128 & 104 & 200 & 273 & 334 & 351 \\
                8  & 185 & 175 & 259 & 282 & 317 & 392 & 20 & 155 & 179 & 275 & 210 & 326 & 300 \\
                9  & 179 & 180 & 211 & 233 & 300 & 387 & 21 & 158 & 120 & 298 & 253 & 315 & 397 \\
                10 & 131 & 120 & 261 & 292 & 346 & 326 & 22 & 160 & 186 & 252 & 284 & 301 & 396 \\
                11 & 104 & 196 & 271 & 212 & 364 & 364 & 23 & 125 & 105 & 223 & 220 & 396 & 378 \\
                12 & 104 & 188 & 202 & 289 & 309 & 346 & 24 & 187 & 134 & 205 & 285 & 316 & 393
                \\
                \hline \hline
            \end{tabularx}\\
            \caption{Instance~P4\cite{Maravelias2003}}
            }
        \end{table}

    \begin{table}[H]
        \centering
        \includegraphics{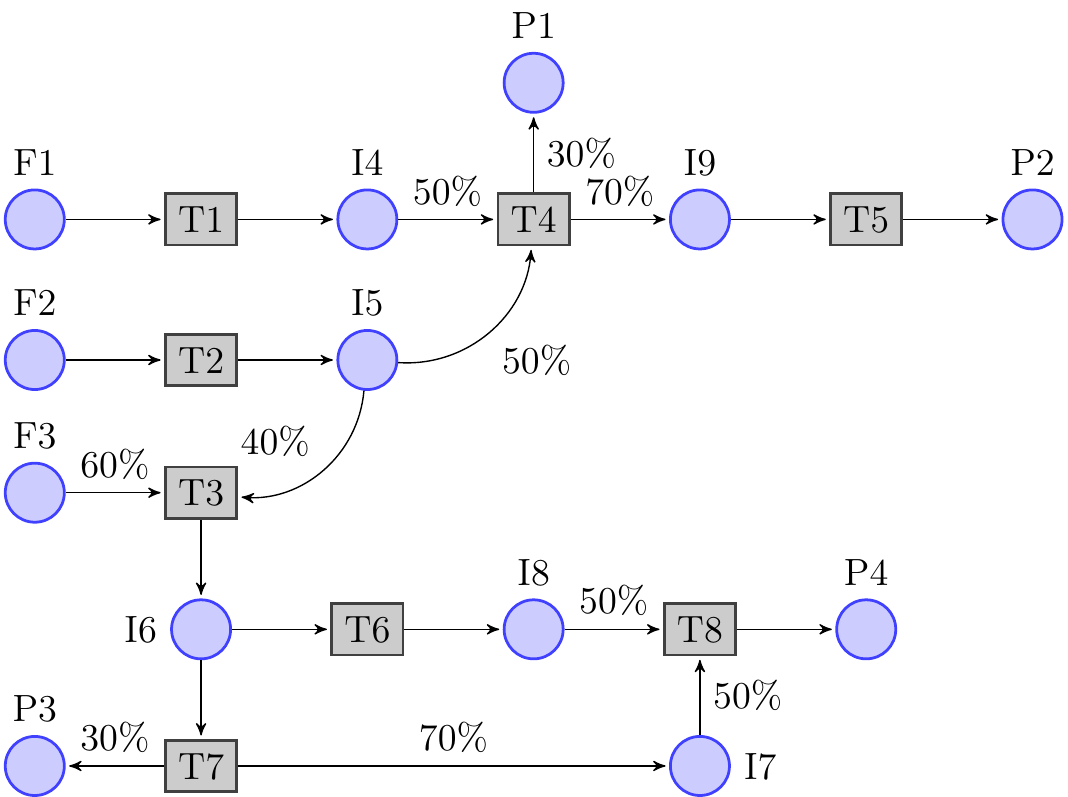}\\
        \vspace{1em}
        {\tiny
        \begin{tabularx}{\textwidth}{r l l l l l l l l l l l l l} \hline \hline
                States        & F1       & F2       & F3       & I4   & I5   & I6   & I7   & I8   & I9   & P1       & P2       & P3       & P4       \\ \hline
                Capacity [kg] & $\infty$ & $\infty$ & $\infty$ & 1000 & 1000 & 1500 & 2000 & 1000 & 3000 & $\infty$ & $\infty$ & $\infty$ & $\infty$ \\
                Initial [kg]  & $\infty$ & $\infty$ & $\infty$ & 0    & 0    & 0    & 0    & 0    & 0    & 0        & 0        & 0        & 0        \\
                Storage cost  & 0        & 0        & 0        & 1    & 1    & 1    & 1    & 1    & 1    & 5        & 5        & 5        & 5        \\ \hline
            \end{tabularx}
            \begin{tabularx}{\textwidth}{l l l l l l l}
                \hline
                Units            & R1   & R2   & R3   & R4   & R5   & R6   \\ \hline
                $v^{min}_j$ [kg] & 0    & 0    & 0    & 0    & 0    & 0    \\
                $v^{max}_j$ [kg] & 1000 & 2500 & 3500 & 1500 & 1000 & 4000 \\
                $s^{max}_j$      & 100  & 100  & 100  & 100  & 100  & 100  \\
                $s^{init}_j$     & 77   & 80   & 90   & 17   & 40   & 33   \\
                $\tau_j$ [hr]    & 21   & 21   & 21   & 21   & 21   & 21   \\
                $c^{maint}_j$    & 1000 & 1700 & 2000 & 1200 & 1000 & 2100 \\ \hline
            \end{tabularx}\\
            \begin{tabularx}{\textwidth}{l l l l l l l l l l l l l l}
                \hline
                Task & Mode & \multicolumn{12}{c}{Unit}\\
                \cline{3-14}
                & & \multicolumn{2}{l}{R1} & \multicolumn{2}{l}{R2} & \multicolumn{2}{l}{R3} & \multicolumn{2}{l}{R4} & \multicolumn{2}{l}{R5} & \multicolumn{2}{l}{R6}\\
                \cline{3-14}
        &        & $p_{i,j,k}$ & $\bar{d}_{i,j,k}/\sigma_{i,j,k}$ & $p$ & $\bar{d}/\sigma$ & $p$ & $\bar{d}/\sigma$ & $p$ & $\bar{d}/\sigma$ & $p$ & $\bar{d}/\sigma$ & $p$ & $\bar{d}/\sigma$ \\
                \hline
                T1 & Slow   & 18          & 3/0.66                       &     &              &     &              &     &              &     &              &     &              \\
                T1 & Normal & 12          & 5/1.35                       &     &              &     &              &     &              &     &              &     &              \\
                T2 & Slow   &             &                              &     &              &     &              & 24  & 3/0.66       &     &              &     &              \\
                T2 & Normal &             &                              &     &              &     &              & 15  & 5/1.35       &     &              &     &              \\
                T3 & Slow   &             &                              & 33  & 5/0.66       &     &              &     &              &     &              &     &              \\
                T3 & Normal &             &                              & 21  & 8/1.35       &     &              &     &              &     &              &     &              \\
                T4 & Slow   &             &                              &     &              & 42  & 5/0.66       &     &              &     &              &     &              \\
                T4 & Normal &             &                              &     &              & 21  & 11/1.35      &     &              &     &              &     &              \\
                T5 & Slow   &             &                              &     &              &     &              &     &              &     &              & 45  & 7/0.66       \\
                T5 & Normal &             &                              &     &              &     &              &     &              &     &              & 30  & 10/1.35      \\
                T6 & Slow   &             &                              &     &              &     &              &     &              & 18  & 3/0.66       &     &              \\
                T6 & Normal &             &                              &     &              &     &              &     &              & 12  & 5/1.35       &     &              \\
                T7 & Slow   &             &                              & 33  & 6/0.66       &     &              &     &              &     &              &     &              \\
                T7 & Normal &             &                              & 21  & 9/1.35       &     &              &     &              &     &              &     &              \\
                T8 & Slow   &             &                              &     &              &     &              &     &              &     &              & 45  & 6/0.66       \\
                T8 & Normal &             &                              &     &              &     &              &     &              &     &              & 30  & 10/1.35      \\
                \hline \hline
            \end{tabularx}
            \caption{Instance~P6\cite{Ierapetritou1998}}
        }
        \end{table}
        \begin{table}[H]
            \small
            \begin{tabularx}{\textwidth}{l l l l l l l l l l l l l}
                \hline
                Period & \multicolumn{12}{c}{Scenario}\\ \cline{2-13}
                & \multicolumn{4}{c}{Low} & \multicolumn{4}{c}{Average} & \multicolumn{4}{c}{High}\\ \cline{2-13}
                   & P1  & P2  & P3  & P4  & P1   & P2   & P3   & P4   & P1   & P2   & P3   & P4   \\ \hline
                1  & 715 & 501 & 801 & 933 & 1277 & 1356 & 1739 & 1349 & 1605 & 1844 & 1898 & 1631 \\
                2  & 593 & 878 & 888 & 739 & 1533 & 1361 & 1384 & 1374 & 1687 & 1882 & 1650 & 1732 \\
                3  & 743 & 995 & 817 & 563 & 1727 & 1400 & 1323 & 1351 & 1510 & 1805 & 1893 & 1699 \\
                4  & 620 & 963 & 636 & 698 & 1702 & 1701 & 1260 & 1605 & 1557 & 1717 & 1693 & 1908 \\
                5  & 991 & 612 & 535 & 686 & 1521 & 1424 & 1600 & 1315 & 1929 & 1769 & 1657 & 1918 \\
                6  & 919 & 819 & 914 & 626 & 1451 & 1412 & 1667 & 1406 & 1539 & 1818 & 1676 & 1521 \\
                7  & 648 & 799 & 920 & 549 & 1447 & 1471 & 1732 & 1620 & 1999 & 1663 & 1632 & 1603 \\
                8  & 609 & 728 & 969 & 925 & 1746 & 1444 & 1694 & 1512 & 1673 & 1987 & 1924 & 1742 \\
                9  & 741 & 575 & 604 & 814 & 1691 & 1456 & 1384 & 1305 & 1826 & 1559 & 1982 & 1882 \\
                10 & 968 & 682 & 853 & 532 & 1436 & 1297 & 1564 & 1356 & 1987 & 1737 & 1598 & 1910 \\
                11 & 624 & 789 & 816 & 728 & 1357 & 1730 & 1441 & 1638 & 1696 & 1660 & 1761 & 1778 \\
                12 & 840 & 929 & 700 & 733 & 1384 & 1283 & 1461 & 1423 & 1779 & 1722 & 1558 & 1655 \\
                13 & 516 & 790 & 705 & 743 & 1387 & 1594 & 1696 & 1533 & 1596 & 1528 & 1857 & 1745 \\
                14 & 556 & 643 & 974 & 890 & 1722 & 1493 & 1528 & 1533 & 1782 & 1829 & 1994 & 1512 \\
                15 & 940 & 715 & 797 & 638 & 1470 & 1377 & 1635 & 1303 & 1949 & 1738 & 1933 & 1782 \\
                16 & 894 & 896 & 693 & 853 & 1563 & 1467 & 1526 & 1565 & 1837 & 1593 & 1938 & 1852 \\
                17 & 792 & 994 & 509 & 647 & 1561 & 1593 & 1614 & 1531 & 1694 & 1869 & 1879 & 1593 \\
                18 & 725 & 718 & 856 & 789 & 1739 & 1681 & 1373 & 1255 & 1902 & 1663 & 1814 & 1953 \\
                19 & 820 & 886 & 971 & 531 & 1576 & 1706 & 1635 & 1628 & 1875 & 1988 & 1648 & 1512 \\
                20 & 950 & 788 & 580 & 859 & 1627 & 1358 & 1469 & 1694 & 1642 & 1519 & 1999 & 1595 \\
                21 & 641 & 773 & 681 & 877 & 1257 & 1433 & 1581 & 1420 & 1890 & 1942 & 1854 & 1826 \\
                22 & 793 & 963 & 950 & 634 & 1250 & 1641 & 1644 & 1404 & 1795 & 1628 & 1658 & 1961 \\
                23 & 504 & 741 & 531 & 671 & 1472 & 1617 & 1311 & 1519 & 1967 & 1768 & 1877 & 1739 \\
                24 & 830 & 529 & 819 & 594 & 1390 & 1645 & 1632 & 1614 & 1844 & 1945 & 1620 & 1563 \\
                \hline
            \end{tabularx}\\
            \caption{Instance~P6\cite{Ierapetritou1998} continued}
        \end{table}
        \begin{table}[H]
            \centering
            \includegraphics{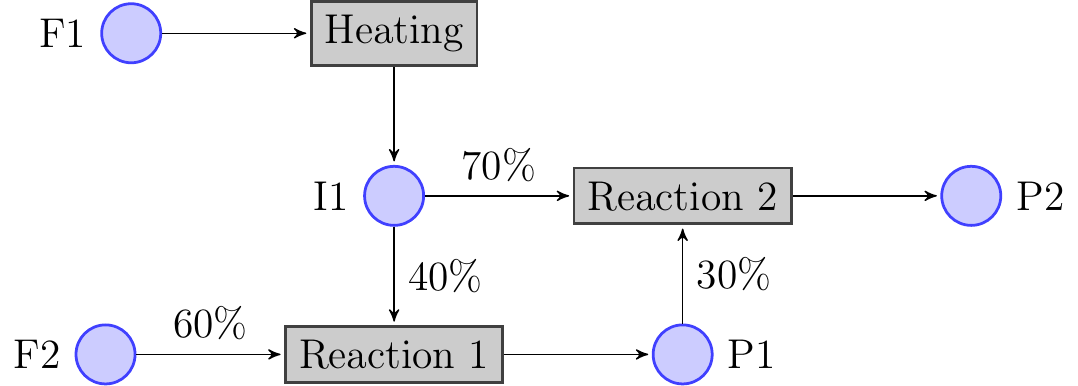}\\
            \vspace{1em}
            {\scriptsize
            \begin{tabularx}{\textwidth}{r l l l l l} \hline \hline
                States        & F1       & F2       & I1       & P1       & P2       \\ \hline
                Capacity [kg] & $\infty$ & $\infty$ & $\infty$ & $\infty$ & $\infty$ \\
                Initial [kg]  & $\infty$ & $\infty$ & 0        & 0        & 0        \\
                Storage cost  & 0        & 0        & 15       & 9        & 5        \\ \hline
            \end{tabularx}
            \begin{tabularx}{\textwidth}{l l l}
                \hline Units            & Heater & Reactor \\ \hline
                $v^{min}_j$ [kg] & 40     & 30      \\
                $v^{max}_j$ [kg] & 100    & 140     \\
                $s^{max}_j$      & 80     & 120     \\
                $s^{init}_j$     & 43     & 30      \\
                $\tau_j$ [hr]    & 2      & 3       \\
                $c^{maint}_j$    & 300    & 300     \\ \hline
            \end{tabularx}\\
            \begin{tabularx}{\textwidth}{l l l l l l}
                \hline
                Task & Mode & \multicolumn{4}{c}{Unit}\\
                \cline{3-6}
                & & \multicolumn{2}{l}{Heater} & \multicolumn{2}{l}{Reactor}\\
                \cline{3-6}
                           &        & $p_{i,j,k}$ & $\bar{d}_{i,j,k}/\sigma_{i,j,k}$ & $p$ & $\bar{d}/\sigma$ \\
                \hline
                Heating    & Slow   & 9           & 5.5/1.49                     &     &              \\
                Heating    & Normal & 6           & 11/2.97                      &     &              \\
                Reaction 1 & Slow   &             &                              & 6   & 7/1.89       \\
                Reaction 1 & Normal &             &                              & 4   & 9/2.43       \\
                Reaction 2 & Slow   &             &                              & 10  & 5/1.35       \\
                Reaction 2 & Normal &             &                              & 6   & 13/3.51      \\
                \hline
            \end{tabularx}\\
            \begin{tabularx}{\textwidth}{l l l l l l l l l l l l l l}
                \hline
                Period & \multicolumn{6}{c}{Scenario} & Period & \multicolumn{6}{c}{Scenario}\\ \cline{2-7} \cline{9-14}
                & \multicolumn{2}{c}{Low} & \multicolumn{2}{c}{Average} & \multicolumn{2}{c}{High} & & \multicolumn{2}{c}{Low} & \multicolumn{2}{c}{Average} & \multicolumn{2}{c}{High}\\ \cline{2-7} \cline{9-14}
                   & P1 & P2 & P1  & P2  & P1  & P2  &    & P1 & P2 & P1  & P2  & P1  & P2  \\ \hline
                1  & 52 & 61 & 121 & 113 & 184 & 201 & 13 & 49 & 45 & 141 & 128 & 180 & 225 \\
                2  & 60 & 32 & 137 & 149 & 228 & 207 & 14 & 29 & 44 & 115 & 120 & 210 & 192 \\
                3  & 42 & 59 & 149 & 128 & 204 & 218 & 15 & 69 & 35 & 148 & 106 & 220 & 192 \\
                4  & 51 & 40 & 146 & 127 & 192 & 200 & 16 & 53 & 51 & 125 & 148 & 224 & 225 \\
                5  & 24 & 42 & 105 & 103 & 206 & 225 & 17 & 38 & 49 & 102 & 136 & 229 & 218 \\
                6  & 34 & 52 & 111 & 100 & 194 & 212 & 18 & 47 & 77 & 112 & 125 & 204 & 201 \\
                7  & 32 & 55 & 105 & 143 & 223 & 216 & 19 & 54 & 53 & 141 & 134 & 197 & 226 \\
                8  & 30 & 62 & 141 & 140 & 182 & 218 & 20 & 26 & 77 & 118 & 131 & 223 & 215 \\
                9  & 65 & 30 & 144 & 128 & 185 & 197 & 21 & 61 & 68 & 116 & 128 & 199 & 183 \\
                10 & 32 & 69 & 147 & 122 & 218 & 180 & 22 & 31 & 71 & 132 & 101 & 212 & 196 \\
                11 & 59 & 37 & 119 & 119 & 182 & 197 & 23 & 53 & 42 & 107 & 121 & 181 & 222 \\
                12 & 24 & 44 & 133 & 139 & 218 & 195 & 24 & 39 & 47 & 113 & 143 & 220 & 211 \\
                \hline \hline
            \end{tabularx}\\
        \caption{Toy instance}
        \label{tab:app-toy}
        }
        \end{table}
        \clearpage
        Except for the toy instance, all instances are taken from the benchmark collection by \citet{Lappas2016}.
        All parameters were converted to a discrete time formulation and degradation parameters were added.
        For Instance~P1\cite{Kondili1993} parameters from \citet{Biondi2017} were used.
        The time steps and horizons used in each instance are given in
        Table~\ref{tab:instances-times}.
        \begin{table}[htb]
            \centering
            \begin{tabular}{l r r r r r}
                \hline
                Instance            & Toy  & P1\cite{Kondili1993}   & P2\cite{Karimi1997}   & P4\cite{Maravelias2003}   & P6\cite{Ierapetritou1998}   \\ \hline
                $T_S$               &  30  &  168  &  168  &  168  &  168  \\
                $\Delta t_S$        &   1  &    3  &    3  &    3  &    3  \\
                $T_P$               & 720  & 4032  & 4032  & 4032  & 4032  \\
                $\Delta t_P$        &  30  &  168  &  168  &  168  &  168  \\
                \hline
            \end{tabular}
            \caption{Time horizons of STN instances}
            \label{tab:instances-times}
        \end{table}

        The degradation of all units is assumed to follow a Wiener process. The distribution of increments is
        \begin{equation*}
        S_{j,t+p_{i,j,k}} - S_{j,t} = D_{i,j,k}, \quad D_{i,j,k} \sim \mathcal{N}\left(\bar{d}_{i,j,k},\sigma^2_{i,j,k}\right),
        \end{equation*}
    where $\bar{d}_{i,j,k}$ is the nominal amount of degradation when task $i$ is performed on unit $j$ in mode $k$.
        When no task is being processed the degradation signal is assumed to vary with zero mean and a small variance:
    \begin{equation}
        S_{j,t+\Delta t} - S_{j,t} = D_{0,\Delta t}, \quad D_{0, \Delta t} \sim \mathcal{N}\left(0, 0.05^2\Delta t\right).
    \end{equation}

    A detailed list of all parameter values used in each case study can be
    found in Tables~\ref{tab:app-p1} through~\ref{tab:app-toy}.

\section{Crossing probabilities of a Brownian motion for a piecewise linear boundary}
\label{sec:apppoetzel}

When the Wiener process is used as a degradation model, the failure probability $p^f_j$ can be calculated efficiently based on analytical result\cite{Poetzelberger1997, Bian2011, Bian2013}.
The probability of a Wiener process with piecewise constant parameters $\boldsymbol{\theta_{j,k}} = \left[\mu_{j,k}, \sigma_{j,k}\right]$ crossing a fixed threshold $s^{max}_j$ is equivalent to the probability of a standard Brownian motion $W(t)$ (a Wiener process with $\mathcal{N}\left(0,1\right)$ distributed increments) crossing a piecewise linear boundary.
The probability of $W(t)$ crossing a linear boundary $at+b$ is known to be inverse gaussian distributed
\begin{equation}
    P(W(t) \geq at+b, t \leq T) = 1 - \Phi(\frac{aT+b}{\sqrt{T}})
    + \exp^{-2ab}\Phi(\frac{aT-b}{\sqrt{T}}),
\end{equation}
where $\Phi(\cdot)$ is the standard normal distribution function \cite{Siegmund1986}.

Based on this, the probability of failure $p^{u,v}_j$ between two consecutive maintenance times $t_{m,u}$ and $t_{m,v}$ can be calculated as
\begin{equation*}
    \begin{aligned}
        p^{u,v}_j & = 1 - \mathbbm{E}h(\boldsymbol{y})\\ & = 1 -
        \prod_{l=1}^n\mathbbm{1}(y_l >0)
        \left(1-\exp\left[-\frac{2y_{l-1}y_l}{t_l-t_{l-1}}\right]\right),
    \end{aligned}
\end{equation*}
where $t_l, l \in \{1,\ldots,n\}$ are the $n$ points in time between $t_{m,u}$ and $t_{m,v}$ at which the operating mode $k$ changes\cite{Poetzelberger1997}.

Here $\boldsymbol{y}$ is a vector representing the values of $W(t)$ at each $t_l$.
It is defined as
\begin{equation*}
    \bm{y = c + MD^{1/2}u},
\end{equation*}
where $\boldsymbol{M}$ is a lower triangular matrix of ones,
\begin{equation*}
    \boldsymbol{D^{1/2}} = \text{diag}(\sqrt{t_1-t_{m,u}}, \sqrt{t_2-t_1},\ldots,\sqrt{t_{m,v}-t_{n}}),
\end{equation*}
$\boldsymbol{u}$ is a random vector with $u_l \sim
\mathcal{N}(0,\sigma^2_{j,k}(t_l))$, and $\boldsymbol{c}$ is the piecwise linear boundary
\begin{equation*}
    \boldsymbol{c} =
    (s_{j}^{max} - s_{j}^{init}) \cdot \left[1,\ldots,1\right]^{\top}
    - \boldsymbol{M}\text{diag}\left(0,\mu_{j,k}\left(t_1\right),\ldots,\mu_{j,k}\left(t_n\right)\right) \boldsymbol{\Delta t}
\end{equation*}
with
\begin{equation*}
    \boldsymbol{\Delta t} = \left[0, t_1-t_0,\ldots,t_n-t_{n-1}\right]^{\top}.
\end{equation*}

The overall probability of failure over the evaluation horizon $T$ can then be
calculated as
\begin{equation}
    \label{eq:pfailpb}
    p^f_j = 1 - \prod_{u = 1}^{n+1}\left(1-p_j^{u-1,l}\right),
\end{equation}
where $t_{m,0} = 0$, $t_{m,n+1} = T$, and $t_{m,u}, u \in \{1,\ldots,n\}$ are the $n$ points in time at which maintenance is carried out on unit $j$ in the evaluation horizon.
\section{Nomenclature}

\begin{longtable}{ l p{0.8\textwidth} }
    {$\boldsymbol{h}$}                      & {health variables}        \\
    {$\boldsymbol{m}$}                      & {maintenance variables}   \\
    {$\boldsymbol{x}$}                      & {process variables}       \\[8pt]
    \multicolumn{2}{l}{\textbf{\large Indices}}\\
    {$i$}                                   & {task}                    \\
    {$j$}                                   & {unit}                    \\
    {$k$}                                   & {operating mode}          \\
    {$s$}                                   & {state}                   \\
    {$t$}                                   & {time}                    \\[8pt]
    \multicolumn{2}{l}{\textbf{\large Sets}}\\
    {$I$}                                   & {set of tasks}            \\
    {$I_j$}                                 & {set of tasks $i$ available on unit $j$}  \\
    {$I_s$}                                 & {set of tasks $i$ consuming state $s$}    \\
    {$\bar{I}_s$}                           & {set of tasks $i$ producing state $s$}    \\
    {$J$}                                   & {set of process units}                     \\
    {$J_i$}                                 & {set of units $j$ on which task $i$ can be performed} \\
    {$K_i$}                                 & {set of operating modes allowed for task $i$}         \\
    {$K_j$}                                 & {set of operating modes $k$ available on unit $j$}    \\
    {$T$}                                   & {evaluation horizon}      \\
    {$T_P$}                                 & {planning horizon}        \\
    {$T_S$}                                 & {scheduling horizon}      \\
    {$\mathcal{U}$}                         & {uncertainty set}         \\
    {$\mathcal{X}$}                         & {set of operating mode sequences $\boldsymbol{x}^k_j$} \\[8pt]
    \multicolumn{2}{l}{\textbf{\large Discrete Variables}}\\
    {$m_{j,t}$} & {$1$ if maintenance is performed on unit $j$ at time $t$}\\
    {$n_{i,j,k,t}$} & {number of times task $i$ is performed on unit $j$ in mode $k$ in time period $t$}\\
    {$w_{i,j,k,t}$} & {$1$ if task $i$ starts on unit $j$ in mode $k$ at time $t$, $0$ otherwise}\\
    {$x_{j,k,t}$} & {$1$ if unit $j$ is operated in mode $k$ at time $t$}\\
    {$\boldsymbol{x}^k_j$} & {sequence of operating modes $[k_1, k_2, \ldots, k_T]$}\\
    {$\omega_{j,k,t}$} & {$1$ if unit $j$ operates in mode $k$ in period $t$, $0$ otherwise}\\[8pt]
    \multicolumn{2}{l}{\textbf{\large Continuous Variables}}\\
    {$a_{i,j,k,t}$} & {amount of material processed by task $i$ in unit $j$ in mode $k$ in time period $t$}\\
    {$b_{i,j,k,t}$} & {amount of material committed to task $i$ on unit $j$ in mode $k$ at time $t$}\\
    {$c^*$} & {minimal cost determined by solving Problem~\ref{eq:rob}}\\
    {$c^f_j$} & {cost of unit $j$ failing}\\
    {$D$} & {random variable modeling increment of $S(t)$}\\
    {$N_{j,k}$} & {random variable modeling $n_{j,k}$}\\
    {$n_{j,k}$} & {number of times mode $k$ occurs on unit $j$ in a given $\Delta t$}\\
    {$p^f_j$} & {probability of failure}\\
    {$\bar{p}^f_j$} & {estimated upper bound on $p^f_j$}\\
    {$q^{fin}_{s}$} & {quantity of state $s$ stored at end of planning horizon}\\
    {$q_{s,t}$} & {quantity of state $s$ stored at time $t$}\\
    {$S(t)$} & {stochastic process modeling $s^{meas}(t)$}\\
    {$s^n_{j,t}$} & {realization of $S_j(t)$ at time t}\\
    {$s^{fin}_j$} & {value of degradation signal at end of planning horizon}\\
    {$s^{meas}(t)$} & {measured degradation signal}\\
    {$S_t$} & {random variable modeling $s^{meas}(t)$ at time $t$}\\
    {$X^k_j(t)$} & {memoryless Markov chain modeling $\boldsymbol{x}^k_j$}\\
    {$X^k_{j,t}$} & {state of Markov chain at time $t$}\\
    {$\phi^d_s$} & {slack variable for unfulfilled demand of state $s$}\\
    {$\phi^q_{s,t}$} & {slack variable for storage capacity violation of state $s$ at time $t$}\\
    {$\boldsymbol{\psi}$} & {process/environmental parameters} \\[8pt]
    \multicolumn{2}{l}{\textbf{\large Parameters}}\\
    {$c^{maint}_j$} & {cost of maintenance for unit $j$}\\
    {$c^{storage}_s$} & {per unit cost of storage for state $s$}\\
    {$c_s$} & {storage capacity of state $s$}\\
    {$\mathcal{D}$} & {distribution of $D$}\\
    {$\bar{d}_{j,k}$} & {nominal value of $\tilde{d}_{j,k}$}\\
    {$\tilde{d}_{j,k}$} & {uncertain parameter modeling increment of $S(t)$}\\
    {$d^{max}_{j,k}$} & {maximum of $\tilde{d}_{j,k}$ in $\mathcal{U}$}\\
    {$N$} & {number of samples in Monte-Carlo simulation}\\
    {$p_{i,j,k}$} & {processing time of task $i$ on unit $j$ in mode $k$}\\
    {$r_{j,t}$} & {residual lifetime of unit $j$ at time $t$}\\
    {$[s_{j,t}]_0, [s_{j,t}]_k$} & {parameters for affine decision rule}\\
    {$s^0$} & {reset value degradation signal}\\
    {$s^{init}$} & {initial value of degradation signal}\\
    {$s^{max}$} & {failure threshold degradation signal}\\
    {$\bar{t}_P$} & {first time period in planning horizon}\\
    {$\bar{t}_S$} & {last time period in scheduling horizon}\\
    {$\Delta t_P$} & {length of planning period}\\
    {$\Delta t_S$} & {length of scheduling period}\\
    {$U$} & {large number}\\
    {$v^{max}_{i,j}$} & {maximum batch size for task $i$ on unit $j$}\\
    {$v^{min}_{i,j}$} & {minimum batch size for task $i$ on unit $j$}\\
    {$\alpha$} & {size parameter of $\mathcal{U}$}\\
    {$\delta_{s,t}$} & {demand for $s$ at time $t$}\\
    {$\epsilon_{j,k}$} & {size parameter of $\mathcal{U}$}\\
    {$\eta_{j,k}$} & {probability of $k$ occuring $n_{j,k}$ times}\\
    {$\boldsymbol{\theta}$} & {parameter vector of $\mathcal{D}$}\\
    {$\mu_{j,k}$} & {mean of $\mathcal{D}_{j,k}$}\\
    {$\pi_{k,k*}$} & {transition probability Markov chain}\\
    {$\bar{\rho}_{i,s}$} & {fraction of state $s$ of material produced by task $i$}\\
    {$\rho_{i,s}$} & {fraction of state $s$ of material consumed by task $i$}\\
    {$\sigma_{j,k}$} & {standard deviation of $\mathcal{D}_{j,k}$}\\
    {$\tau_j$} & {duration of maintenance on unit $j$}\\
\end{longtable}

\bibliographystyle{compactnat}
\bibliography{lit_manual,lit}

\begin{thebibliography}{64}
\providecommand{\natexlab}[1]{#1}
\providecommand{\url}[1]{\texttt{#1}}
\expandafter\ifx\csname urlstyle\endcsname\relax
  \providecommand{\doi}[1]{doi: #1}\else
  \providecommand{\doi}{doi: \begingroup \urlstyle{rm}\Url}\fi

\bibitem[Jardine et~al.(2006)Jardine, Lin, and Banjevic]{Jardine2006}
Andrew~K.S. Jardine, Daming Lin, and Dragan Banjevic.
\newblock {A review on machinery diagnostics and prognostics implementing
  condition-based maintenance}.
\newblock \emph{Mechanical Systems and Signal Processing}, 20\penalty0
  (7):\penalty0 1483--1510, 2006.

\bibitem[Barraza-Barraza et~al.(2014)Barraza-Barraza, Lim{\'{o}}n-Robles, and
  Beruvides]{Barraza2014}
Diana Barraza-Barraza, Jorge Lim{\'{o}}n-Robles, and Mario~G Beruvides.
\newblock {Opportunities and challenges in Condition-Based Maintenance
  research}.
\newblock \emph{IIE Annual Conference and Expo 2014}, pages 3035--3043, 2014.

\bibitem[Bousdekis et~al.(2015)Bousdekis, Magoutas, Apostolou, and
  Mentzas]{Bousdekis2015}
Alexandros Bousdekis, Babis Magoutas, Dimitris Apostolou, and Gregoris Mentzas.
\newblock {Review, analysis and synthesis of prognostic-based decision support
  methods for condition based maintenance}.
\newblock \emph{Journal of Intelligent Manufacturing}, pages 1--14, 2015.

\bibitem[Alaswad and Xiang(2017)]{Alaswad2016}
Suzan Alaswad and Yisha Xiang.
\newblock {A review on condition-based maintenance optimization models for
  stochastically deteriorating system}.
\newblock \emph{Reliability Engineering {\&} System Safety}, 157:\penalty0
  54--63, 2017.

\bibitem[Meeker and Hong(2014)]{Meeker2014}
William~Q. Meeker and Yili Hong.
\newblock {Reliability Meets Big Data: Opportunities and Challenges}.
\newblock \emph{Quality Engineering}, 26\penalty0 (1):\penalty0 102--116, 2014.

\bibitem[Dedopoulos and Shah(1995{\natexlab{a}})]{Dedopoulos1995}
Ilias~T. Dedopoulos and Nilay Shah.
\newblock {Optimal Short-Term Scheduling of Maintenance and Production for
  Multipurpose Plants}.
\newblock \emph{Industrial {\&} Engineering Chemistry Research}, 34\penalty0
  (1):\penalty0 192--201, 1995{\natexlab{a}}.

\bibitem[Dedopoulos and Shah(1995{\natexlab{b}})]{Dedopoulos1995b}
I.T. Dedopoulos and N.~Shah.
\newblock {Preventive maintenance policy optimization for multipurpose plant
  equipment}.
\newblock \emph{Computers {\&} Chemical Engineering}, 19:\penalty0 693--698,
  1995{\natexlab{b}}.

\bibitem[Vassiliadis(1999)]{Vassiliadis1999}
Constantinos~Georgiou Vassiliadis.
\newblock \emph{{Integration of Maintenance Optimization in Process Design and
  Operation under Uncertainty}}.
\newblock PhD thesis, Imperial College of Science, Technology and Medicine,
  1999.

\bibitem[Vassiliadis and Pistikopoulos(2001)]{Vassiliadis2001}
C.G. Vassiliadis and E.N. Pistikopoulos.
\newblock {Maintenance scheduling and process optimization under uncertainty}.
\newblock \emph{Computers and Chemical Engineering}, 25\penalty0
  (2-3):\penalty0 217--236, 2001.

\bibitem[Casas-Liza et~al.(2005)Casas-Liza, Pinto, and Papageorgiou]{Casas2005}
J.~Casas-Liza, J.M. Pinto, and L.G. Papageorgiou.
\newblock {Mixed Integer Optimization for Cyclic Scheduling of Multiproduct
  Plants Under Exponential Performance Decay}.
\newblock \emph{Chemical Engineering Research and Design}, 83\penalty0
  (10):\penalty0 1208--1217, 2005.

\bibitem[Georgiadis et~al.(2000)Georgiadis, Papageorgiou, and
  Macchietto]{Georgiadis2000}
Michael~C. Georgiadis, Lazaros~G. Papageorgiou, and Sandro Macchietto.
\newblock {Optimal Cleaning Policies in Heat Exchanger Networks under Rapid
  Fouling}.
\newblock \emph{Industrial {\&} Engineering Chemistry Research}, 39\penalty0
  (2):\penalty0 441--454, 2000.

\bibitem[Liu et~al.(2014)Liu, Yahia, and Papageorgiou]{Liu2014}
Songsong Liu, Ahmed Yahia, and Lazaros~G. Papageorgiou.
\newblock {Optimal Production and Maintenance Planning of Biopharmaceutical
  Manufacturing under Performance Decay}.
\newblock \emph{Industrial {\&} Engineering Chemistry Research}, 53\penalty0
  (44):\penalty0 17075--17091, 2014.

\bibitem[Xenos et~al.(2016)Xenos, Kopanos, Cicciotti, and Thornhill]{Xenos2016}
Dionysios~P. Xenos, Georgios~M. Kopanos, Matteo Cicciotti, and Nina~F.
  Thornhill.
\newblock {Operational optimization of networks of compressors considering
  condition-based maintenance}.
\newblock \emph{Computers and Chemical Engineering}, 84:\penalty0 117--131,
  2016.

\bibitem[Zulkafli and Kopanos(2016)]{Zulkafli2016}
Nur~I. Zulkafli and Georgios~M. Kopanos.
\newblock {Planning of production and utility systems under unit performance
  degradation and alternative resource-constrained cleaning policies}.
\newblock \emph{Applied Energy}, 183:\penalty0 577--602, 2016.

\bibitem[Zulkafli and Kopanos(2017)]{Zulkafli2017}
Nur~I. Zulkafli and Georgios~M. Kopanos.
\newblock {Integrated condition-based planning of production and utility
  systems under uncertainty}.
\newblock \emph{Journal of Cleaner Production}, 167:\penalty0 776--805, 2017.

\bibitem[Aguirre and Papageorgiou(2018)]{Aguirre2018}
Adrian~M. Aguirre and Lazaros~G. Papageorgiou.
\newblock {Medium-term optimization-based approach for the integration of
  production planning, scheduling and maintenance}.
\newblock \emph{Computers and Chemical Engineering}, 0:\penalty0 1--21, 2018.

\bibitem[Rajagopalan et~al.(2017)Rajagopalan, Sahinidis, Amaran, Agarwal, Bury,
  Sharda, and Wassick]{Rajagopalan2017}
Sreekanth Rajagopalan, Nikolaos~V. Sahinidis, Satyajith Amaran, Anshul Agarwal,
  Scott~J. Bury, Bikram Sharda, and John~M. Wassick.
\newblock {Risk analysis of turnaround reschedule planning in integrated
  chemical sites}.
\newblock \emph{Computers {\&} Chemical Engineering}, 107:\penalty0 381--394,
  2017.

\bibitem[Biondi et~al.(2017)Biondi, Sand, and Harjunkoski]{Biondi2017}
Matteo Biondi, Guido Sand, and Iiro Harjunkoski.
\newblock {Optimization of multipurpose process plant operations: A
  multi-time-scale maintenance and production scheduling approach}.
\newblock \emph{Computers and Chemical Engineering}, 99:\penalty0 325--339,
  2017.

\bibitem[Kondili et~al.(1993)Kondili, Pantelides, and Sargent]{Kondili1993}
E.~Kondili, C.C. Pantelides, and R.W.H. Sargent.
\newblock {A general algorithm for short-term scheduling of batch operations -
  I. MILP formulation}.
\newblock \emph{Computers and Chemical Engineering}, 17\penalty0 (2):\penalty0
  211--227, 1993.

\bibitem[Yildirim et~al.(2016{\natexlab{a}})Yildirim, Sun, and
  Gebraeel]{Yildirim2016_1}
Murat Yildirim, Xu~Andy Sun, and Nagi~Z Gebraeel.
\newblock {Sensor-Driven Condition-Based Generator Maintenance Scheduling -
  Part I: Maintenance Problem}.
\newblock \emph{IEEE Transactions on Power Systems}, 31\penalty0 (6):\penalty0
  4253--4262, 2016{\natexlab{a}}.

\bibitem[Yildirim et~al.(2016{\natexlab{b}})Yildirim, Sun, and
  Gebraeel]{Yildirim2016_2}
Murat Yildirim, Xu~Andy Sun, and Nagi~Z Gebraeel.
\newblock {Sensor-Driven Condition-Based Generator Maintenance Scheduling -
  Part II: Incorporating Operations}.
\newblock \emph{IEEE Transactions on Power Systems}, 31\penalty0 (6):\penalty0
  4263--4271, 2016{\natexlab{b}}.

\bibitem[Yildirim et~al.(2017)Yildirim, Gebraeel, and Sun]{Yildirim2017}
Murat Yildirim, Nagi~Z. Gebraeel, and Xu~Andy Sun.
\newblock {Integrated Predictive Analytics and Optimization for Opportunistic
  Maintenance and Operations in Wind Farms}.
\newblock \emph{IEEE Transactions on Power Systems}, 32\penalty0 (6):\penalty0
  4319--4328, 2017.

\bibitem[Ba{\c{s}}{\c{c}}iftci et~al.(2018)Ba{\c{s}}{\c{c}}iftci, Ahmed,
  Gebraeel, and Yildirim]{Basciftci2018}
Beste Ba{\c{s}}{\c{c}}iftci, Shabbir Ahmed, Nagi~Z. Gebraeel, and Murat
  Yildirim.
\newblock {Stochastic Optimization of Maintenance and Operations Schedules
  under Unexpected Failures}.
\newblock \emph{IEEE Transactions on Power Systems}, 8950\penalty0
  (c):\penalty0 1--1, 2018.

\bibitem[Verheyleweghen and J{\"{a}}schke(2017)]{Verheyleweghen2017}
Adriaen Verheyleweghen and Johannes J{\"{a}}schke.
\newblock {Framework for Combined Diagnostics, Prognostics and Optimal
  Operation of a Subsea Gas Compression System}.
\newblock \emph{IFAC-PapersOnLine}, 50\penalty0 (1):\penalty0 15916--15921,
  2017.

\bibitem[Lappas and Gounaris(2016)]{Lappas2016}
Nikolaos~H Lappas and Chrysanthos~E Gounaris.
\newblock {Multi-stage adjustable robust optimization for process scheduling
  under uncertainty}.
\newblock \emph{AIChE Journal}, 62\penalty0 (5):\penalty0 1646--1667, 2016.

\bibitem[Karimi and McDonald(1997)]{Karimi1997}
Iftekhar~A. Karimi and Conor~M. McDonald.
\newblock {Planning and Scheduling of Parallel Semicontinuous Processes. 2.
  Short-Term Scheduling}.
\newblock \emph{Industrial {\&} Engineering Chemistry Research}, 36\penalty0
  (7):\penalty0 2701--2714, 1997.

\bibitem[Maravelias and Grossmann(2003)]{Maravelias2003}
Christos~T. Maravelias and Ignacio~E. Grossmann.
\newblock {New general continuous-time state - Task network formulation for
  short-term scheduling of multipurpose batch plants}.
\newblock \emph{Industrial {\&} Engineering Chemistry Research}, 42\penalty0
  (13):\penalty0 3056--3074, 2003.

\bibitem[Ierapetritou and Floudas(1998)]{Ierapetritou1998}
M.~G. Ierapetritou and C.~A. Floudas.
\newblock {Effective continuous-time formulation for short-term scheduling. 1.
  Multipurpose batch processes}.
\newblock \emph{Industrial {\&} Engineering Chemistry Research}, 37\penalty0
  (11):\penalty0 4341--4359, 1998.

\bibitem[Wang and Coit(2007)]{Wang2007}
Peng Wang and David Coit.
\newblock {Reliability and Degradation Modeling with Random or Uncertain
  Failure Threshold}.
\newblock In \emph{2007 Proceedings - Annual Reliability and Maintainability
  Sympsoium}, pages 392--397. IEEE, 2007.

\bibitem[Doyen and Gaudoin(2004)]{Doyen2004}
Laurent Doyen and Olivier Gaudoin.
\newblock {Classes of imperfect repair models based on reduction of failure
  intensity or virtual age}.
\newblock \emph{Reliability Engineering {\&} System Safety}, 84\penalty0
  (1):\penalty0 45--56, 2004.

\bibitem[Applebaum(2004)]{Applebaum2004}
David Applebaum.
\newblock {L{\'{e}}vy processes-from probability to finance and quantum
  groups}.
\newblock \emph{Notices of the American Mathematical Society}, 51\penalty0
  (11):\penalty0 1336--1347, 2004.

\bibitem[Ye and Xie(2015)]{Ye2015}
Zhi-Sheng Ye and Min Xie.
\newblock {Stochastic modelling and analysis of degradation for highly reliable
  products}.
\newblock \emph{Applied Stochastic Models in Business and Industry},
  31\penalty0 (1):\penalty0 16--32, 2015.

\bibitem[Si et~al.(2011)Si, Wang, Hu, and Zhou]{Si2011}
Xiao-Sheng Si, Wenbin Wang, Chang-Hua Hu, and Dong-Hua Zhou.
\newblock {Remaining useful life estimation - A review on the statistical data
  driven approaches}.
\newblock \emph{European Journal of Operational Research}, 213\penalty0
  (1):\penalty0 1--14, 2011.

\bibitem[Nguyen et~al.(2018)Nguyen, Fouladirad, and Grall]{Nguyen2018}
Khanh~T.P. Nguyen, Mitra Fouladirad, and Antoine Grall.
\newblock {Model selection for degradation modeling and prognosis with health
  monitoring data}.
\newblock \emph{Reliability Engineering {\&} System Safety}, 169\penalty0
  (August 2017):\penalty0 105--116, 2018.

\bibitem[Liao and Tian(2013)]{Liao2013}
Haitao Liao and Zhigang Tian.
\newblock {A framework for predicting the remaining useful life of a single
  unit under time-varying operating conditions}.
\newblock \emph{IIE Transactions}, 45\penalty0 (9):\penalty0 964--980, 2013.

\bibitem[Li et~al.(2016)Li, Gao, Tang, and Li]{Li2016}
Qi~Li, Zhanbao Gao, Diyin Tang, and Baoan Li.
\newblock {Remaining useful life estimation for deteriorating systems with
  time-varying operational conditions and condition-specific failure zones}.
\newblock \emph{Chinese Journal of Aeronautics}, 29\penalty0 (3):\penalty0
  662--674, 2016.

\bibitem[Chevallier and Goutte(2017)]{Chevallier2017}
Julien Chevallier and St{\'{e}}phane Goutte.
\newblock {On the estimation of regime-switching L{\'{e}}vy models}.
\newblock \emph{Studies in Nonlinear Dynamics and Econometrics}, 21\penalty0
  (1):\penalty0 3--29, 2017.

\bibitem[Gebraeel et~al.(2005)Gebraeel, Lawley, Li, and Ryan]{Gebraeel2005}
Nagi~Z. Gebraeel, Mark~A. Lawley, Rong Li, and Jennifer~K. Ryan.
\newblock {Residual-life distributions from component degradation signals: A
  Bayesian approach}.
\newblock \emph{IIE Transactions}, 37\penalty0 (6):\penalty0 543--557, 2005.

\bibitem[Bian and Gebraeel(2012)]{Bian2012}
Linkan Bian and Nagi Gebraeel.
\newblock {Computing and updating the first-passage time distribution for
  randomly evolving degradation signals}.
\newblock \emph{IIE Transactions}, 44\penalty0 (11):\penalty0 974--987, 2012.

\bibitem[Gebraeel and Pan(2008)]{Gebraeel2008}
Nagi Gebraeel and Jing Pan.
\newblock {Prognostic degradation models for computing and updating residual
  life distributions in a time-varying environment}.
\newblock \emph{IEEE Transactions on Reliability}, 57\penalty0 (4):\penalty0
  539--550, 2008.

\bibitem[Ning and You(2017)]{Ning2017}
Chao Ning and Fengqi You.
\newblock {A data-driven multistage adaptive robust optimization framework for
  planning and scheduling under uncertainty}.
\newblock \emph{AIChE Journal}, 63\penalty0 (10):\penalty0 4343--4369, 2017.

\bibitem[Guzman et~al.(2016)Guzman, Matthews, and Floudas]{Guzman2015}
Yannis~A. Guzman, Logan~R. Matthews, and Christodoulos~A. Floudas.
\newblock {New a priori and a posteriori probabilistic bounds for robust
  counterpart optimization: I. Unknown probability distributions}.
\newblock \emph{Computers {\&} Chemical Engineering}, 84:\penalty0 568--598,
  2016.

\bibitem[Li et~al.(2011)Li, Ding, and Floudas]{Li2011}
Zukui Li, Ran Ding, and Christodoulos Floudas.
\newblock {A Comparative Theoretical and Computational Study on Robust
  Counterpart Optimization: I. Robust Linear Optimization and Robust Mixed
  Integer Linear Optimization}.
\newblock \emph{Industrial {\&} Engineering Chemistry Research}, 50\penalty0
  (18):\penalty0 10567--10603, 2011.

\bibitem[P{\"{o}}tzelberger and Wang(1997)]{Poetzelberger1997}
Klaus; P{\"{o}}tzelberger and Liqun Wang.
\newblock {Boundary Crossing Probability for Brownian Motion and General
  Boundaries}.
\newblock \emph{Journal of Applied Probability}, 34\penalty0 (1):\penalty0
  54--65, 1997.

\bibitem[Bian and Gebraeel(2011)]{Bian2011}
Linkan Bian and Nagi Gebraeel.
\newblock {A stochastic methodology for prognostics under time-varying
  environmental future profiles}.
\newblock In \emph{Proceedings of the 2011 Conference on Intelligent Data
  Understanding, CIDU 2011}, 2011.

\bibitem[Breuer(2012)]{Breuer2012}
Lothar Breuer.
\newblock {Occupation times for Markov-modulated Brownian motion}.
\newblock \emph{Journal of Applied Probability}, 49\penalty0 (2):\penalty0
  549--565, 2012.

\bibitem[Ozekici(1995)]{Ozekici1995}
Souleyman Ozekici.
\newblock {Optimal maintenance policies in random environments}.
\newblock \emph{European Journal of Operational Research}, 82\penalty0
  (2):\penalty0 283--294, 1995.

\bibitem[Paton et~al.(2014)Paton, Troffaes, Boatman, Hussein, and
  Hart]{Paton2014}
Lewis Paton, Matthias C~M Troffaes, Nigel Boatman, Mohamud Hussein, and Andy
  Hart.
\newblock {Multinomial Logistic Regression on Markov Chains for Crop Rotation
  Modelling}.
\newblock \emph{Information Processing and Management of Uncertainty in
  Knowledge-Based Systems}, pages 476--485, 2014.

\bibitem[Sinha et~al.(2011)Sinha, Islam, and Ahamed]{Sinha2011}
Narayan~Chanra Sinha, M.~Ataharul Islam, and Kazi~Saleh Ahamed.
\newblock {Logistic Regression Models for Higher Order Transition Probabilities
  of Markov Chain for Analyzing the Occurrences of Daily Rainfall Data}.
\newblock \emph{Journal of Modern Applied Statistical Methods}, 10\penalty0
  (1):\penalty0 337--348, 2011.

\bibitem[Pedregosa et~al.(2012)Pedregosa, Varoquaux, Gramfort, Michel, Thirion,
  Grisel, Blondel, Prettenhofer, Weiss, Dubourg, Vanderplas, Passos,
  Cournapeau, Brucher, Perrot, and Duchesnay]{sklearn}
Fabian Pedregosa, Ga{\"{e}}l Varoquaux, Alexandre Gramfort, Vincent Michel,
  Bertrand Thirion, Olivier Grisel, Mathieu Blondel, Peter Prettenhofer, Ron
  Weiss, Vincent Dubourg, Jake Vanderplas, Alexandre Passos, David Cournapeau,
  Matthieu Brucher, Matthieu Perrot, and {\'{E}}douard Duchesnay.
\newblock {Scikit-learn: Machine Learning in Python}.
\newblock \emph{Journal of Machine Learning Research}, 12:\penalty0 2825--2830,
  2012.

\bibitem[Bravo et~al.(2010)Bravo, L'Huillier, {Luis Lobato}, and
  Weber]{Bravo2010}
Cristian Bravo, Gaston L'Huillier, Jose {Luis Lobato}, and Richard Weber.
\newblock {Probability Estimation for Multiclass Problems Combining SVMs and
  Neural Networks}.
\newblock \emph{Neural Network World}, 20\penalty0 (4):\penalty0 475--489,
  2010.

\bibitem[Dreiseitl and Ohno-Machado(2002)]{Dreiseitl2002}
Stephan Dreiseitl and Lucila Ohno-Machado.
\newblock {Logistic regression and artificial neural network classification
  models: A methodology review}.
\newblock \emph{Journal of Biomedical Informatics}, 35\penalty0 (5-6):\penalty0
  352--359, 2002.

\bibitem[Li and Li(2015{\natexlab{a}})]{Li2015}
Zhuangzhi Li and Zukui Li.
\newblock {Chance constrained planning and scheduling under uncertainty using
  robust optimization approximation}.
\newblock \emph{IFAC-PapersOnLine}, 28\penalty0 (8):\penalty0 1156--1161,
  2015{\natexlab{a}}.

\bibitem[Li and Li(2015{\natexlab{b}})]{Li2015_2}
Zhuangzhi Li and Zukui Li.
\newblock {Optimal robust optimization approximation for chance constrained
  optimization problem}.
\newblock \emph{Computers and Chemical Engineering}, 74:\penalty0 89--99,
  2015{\natexlab{b}}.

\bibitem[Jones et~al.(1998)Jones, Schonlau, and Welch]{Jones1998}
Donald~R Jones, Matthias Schonlau, and William~J Welch.
\newblock {Efficient Global Optimization of Expensive Black-Box Functions}.
\newblock \emph{Journal of Global Optimization}, 13:\penalty0 455--492, 1998.

\bibitem[Hart et~al.(2017)Hart, Laird, Watson, Woodruff, Hackebeil, Nicholson,
  and Siirola]{Hart2017}
William~E. Hart, Carl~D. Laird, Jean-Paul Watson, David~L. Woodruff, Gabriel~A.
  Hackebeil, Bethany~L. Nicholson, and John~D. Siirola.
\newblock \emph{{Pyomo -- Optimization Modeling in Python}}, volume~67.
\newblock 2017.

\bibitem[Hart et~al.(2011)Hart, Watson, and Woodruff]{Pyomo2011}
William~E. Hart, Jean~Paul Watson, and David~L. Woodruff.
\newblock {Pyomo: Modeling and solving mathematical programs in Python}.
\newblock \emph{Mathematical Programming Computation}, 3\penalty0 (3):\penalty0
  219--260, 2011.

\bibitem[Wiebe(2018)]{github}
Johannes Wiebe.
\newblock {STN} with degradation.
  \href{https://doi.org/10.5281/zenodo.1313718}{DOI: 10.5281/zenodo.1313718},
  2018.

\bibitem[Mistry et~al.(2018)Mistry, D'Iddio, Huth, and Misener]{Mistry2018}
Miten Mistry, Andrea~Callia D'Iddio, Michael Huth, and Ruth Misener.
\newblock {Satisfiability modulo theories for process systems engineering}.
\newblock \emph{Computers and Chemical Engineering}, 113:\penalty0 98--114,
  2018.

\bibitem[Cir{\'{e}} et~al.(2016)Cir{\'{e}}, {\c{C}}oban, and Hooker]{Cire2016}
Andre~A. Cir{\'{e}}, Elvin {\c{C}}oban, and John~N. Hooker.
\newblock {Logic-based Benders decomposition for planning and scheduling: A
  computational analysis}.
\newblock \emph{Knowledge Engineering Review}, 31\penalty0 (5):\penalty0
  440--451, 2016.

\bibitem[{Letsios} and {Misener}(2018)]{Letsios2018}
D.~{Letsios} and R.~{Misener}.
\newblock {Exact Lexicographic Scheduling and Approximate Rescheduling}.
\newblock \emph{ArXiv e-prints}, arXiv:1805.03437, 2018.

\bibitem[Li et~al.(2012)Li, Tang, and Floudas]{Li2012}
Zukui Li, Qiuhua Tang, and Christodoulos~A. Floudas.
\newblock {A Comparative Theoretical and Computational Study on Robust
  Counterpart Optimization: II. Probabilistic Guarantees on Constraint
  Satisfaction}.
\newblock \emph{Industrial {\&} Engineering Chemistry Research}, 51\penalty0
  (19):\penalty0 6769--6788, 2012.

\bibitem[Bian and Gebraeel(2013)]{Bian2013}
Linkan Bian and Nagi Gebraeel.
\newblock {Stochastic methodology for prognostics under continuously varying
  environmental profiles}.
\newblock \emph{Statistical Analysis and Data Mining}, 6\penalty0 (3):\penalty0
  260--270, 2013.

\bibitem[Siegmund(1986)]{Siegmund1986}
David Siegmund.
\newblock {Boundary Crossing Probabilities and Statistical Applications}.
\newblock \emph{The Annals of Statistics}, 14\penalty0 (2):\penalty0 361--404,
  1986.

\end{thebibliography}

\end{document}